\title[Fundamental groups of the complements of shadows]{Presentation of the fundamental groups of complements of shadows}
\author[Ishikawa]{Masaharu Ishikawa}
\address{Department of Mathematics, Hiyoshi Campus, Keio University, 
4-1-1 Hiyoshi, Kohoku, Yokohama 223-8521, Japan}
\email{ishikawa@keio.jp}
\author[Koda]{Yuya Koda}
\address{
Department of Mathematics, Hiroshima University, 
1-3-1 Kagamiyama, Higashi-Hiroshima, 739-8526, Japan}
\email{ykoda@hiroshima-u.ac.jp}
\author[Naoe]{Hironobu Naoe}
\address{Department of Mathematics, Chuo University, 
1-13-27 Kasuga, Bunkyo-ku, Tokyo, 112-8551, Japan}
\email{naoe@math.chuo-u.ac.jp}
\dedicatory{Dedicated to Professor Norbert A'Campo on his 80th birthday}
\theoremstyle{plain}
\newtheorem*{theorem*}{Theorem}
\newtheorem*{lemma*} {Lemma}
\newtheorem*{corollary*} {Corollary}
\newtheorem*{proposition*}{Proposition}
\newtheorem*{conjecture*}{Conjecture}
\newtheorem{theorem}{Theorem}[section]
\newtheorem{lemma}[theorem]{Lemma}
\theoremstyle{remark}
\newtheorem{definition}[theorem]{Definition}
\newtheorem{example}[theorem]{Example}
\newtheorem{remark}[theorem]{Remark}
\newtheorem*{example*}{Example}
\theoremstyle{definition}
\newtheoremstyle{citing}
  {}
  {}
  {\itshape}
  {}
  {\bfseries}
  {.}
  {.5em}
  {\thmnote{#3}}
\theoremstyle{citing}
\newcommand{\Integer}{\mathbb{Z}}
\newcommand{\Real}{\mathbb{R}}
\newcommand{\Complex}{\mathbb{C}}
\newcommand{\Int}{\mathrm{Int\,}}
\newcommand{\Nbd}{\mathrm{Nbd}}
\newcommand{\gl}{\frak{gl}}
\newcommand{\Sing}{\text{\rm Sing}}
\begin{document}

\begin{abstract}
A shadowed polyhedron is a simple polyhedron equipped with half integers on regions, called gleams, which represents a compact, oriented, smooth $4$-manifold.
The polyhedron is embedded in the $4$-manifold and it is called a {\it shadow} of that manifold.
A subpolyhedron of a shadow represents a possibly singular subsurface in the $4$-manifold. 
In this paper, we focus on contractible shadows obtained from the unit disk by attaching annuli along generically immersed closed curves on the disk.
In this case, the $4$-manifold is always a $4$-ball.
Milnor fibers of plane curve singularities and complexified real line arrangements can be represented in this way.
We give a presentation of the fundamental group of the complement of a subpolyhedron of such a shadow in the $4$-ball.
The method is very similar to the Wirtinger presentation of links in knot theory.
\end{abstract}

\maketitle

\section{Introduction}

The Milner fibration~\cite{Mil68} of singularities of a complex polynomial map is an important tool widely used when we  study the structure of singularities. In particular, the study of monodromy of the fibration plays an important role in understanding singularities.
In the case of polynomials of two variables, the plane curve given by a polynomial forms a 
real $2$-dimensional object embedded in $\Complex^2$. Thus, in that case, we can explain its properties more visually, and hence explicitly.
For example, using real deformations of complex plane curve singularities introduced by N. A'Campo~\cite{AC75a, AC75b}  and S. M.~Gusein-Zade~\cite{GZ74a, GZ74b, GZ77}, we can see the configurations of vanishing cycles of an isolated plane curve singularity from a diagram consisting 
of immersed intervals on $\Real^2$. Later, in~\cite{AC99,AC98}, A'Campo gave a way for restoring the Milner fibration from the diagram by replacing the real plane with the unit disk and regarding $\Complex^2$ as the tangent bundle of $\Real^2$. This method can also be applied to any generically immersed intervals and circles on the unit disk even if it cannot be obtained as a real deformation of a plane curve singularity. Such a diagram is called a {\it divide}. 
It is shown that the fibration associated with
a divide is equivalent to the Milnor fibration if it is obtained from
a real deformation of a plane curve singularity.

A {\it shadowed polyhedron} is a simple polyhedron equipped with half integers on regions, called gleams. It represents a compact, oriented, smooth $4$-manifold, in which that polyhedron is embedded in a natural way as a {\it shadow}~\cite{Tur92, Tur94}.
The union of a Milnor fiber of a plane curve singularity and the disks bounded by its vanishing cycles constitutes a polyhedron embedded in the Milnor ball, which is a $4$-ball in $\Complex^2$ centered at the singular point.
Topologically, this polyhedron is the union of the unit disk and a finite number of annuli attached along immersed curves on the disk so that the polyhedron is simple and contractible. 
In~\cite{IN20}, we regarded it as a shadow and determined its gleams in more general context, including oriented divides introduced by W. Gibson and the first author~\cite{GI02a}.

In this paper, we explain how to calculate the fundamental group of the complement of a subpolyhedron of a shadowed polyhedron in its $4$-manifold when the shadow consists of the unit disk and annuli attached along immersed curves on the disk so that the polyhedron is simple and contractible. 
As explained above, this includes the polyhedron of the fibration of a divide, and, as its special cases, 
it includes polyhedrons of Milnor fibrations and complexified real line arrangements.
Our main theorem is Theorem~\ref{thm1}, where a presentation of the fundamental group of the complement is given. The proof is very basic. We only use the van Kampen theorem. 
We can simplify the algorithm to obtain the presentation slightly when the subpolyhedron does not contain the region containing the boundary of the unit disk, which is explained in Theorem~\ref{thm2}.
In Section~4, it is explained that 
a Wirtinger presentation of the link group of a link in $S^3$ is obtained from our presentation  
when the gleams satisfy a certain condition. In Sections~5 and~6, we show some calculation of the fundamental groups of divides and complexified real line arrangements by using our method. 

This work is a continuation of the study of the relation between divides and shadows suggested by Norbert A'Campo. The authors would like to thank him for introducing them to this new world.
The first author is supported by JSPS KAKENHI Grant Numbers JP19K03499,
JP17H06128.
The second author is supported by JSPS KAKENHI Grant Numbers
JP20K03588, JP20K03614, JP21H00978.
The third author is supported by JSPS KAKENHI Grant Number JP20K14316. 
This work is supported by JSPS-VAST Joint Research Program, Grant number JPJSBP120219602.

\section{Preliminaries}

Throughout this paper, for a smooth manifold or a polyhedral space $A$, $\Int A$ denotes the interior of $A$,
$\partial A$ denotes the boundary of $A$, and $\Nbd(B;A)$ denotes a closed regular neighborhood of a subspace $B$ of $A$ in $A$,
where $A$ is equipped with the natural PL structure if $A$ is a smooth manifold.
A tree means a simply-connected graph.

\subsection{Shadowed polyhedron}

A compact space $X$ is called a \textit{simple polyhedron} if each point of $X$ has a neighborhood homeomorphic to one of (i)-(v) in Figure~\ref{fig17}.
The set of points of 
type 
(ii), (iii) or (v) is called the \textit{singular set} of $X$ 
and denoted by $\Sing(X)$. 
A point of type (iii) is a \textit{vertex}, and each connected component of $\Sing(X)$ 
with vertices removed is called an \textit{edge}.
Each connected component of $X\setminus\Sing(X)$ is called a \textit{region}. 
Hence a region consists of points of type (i) or (iv). 
A region is said to be \textit{internal} if it contains no points of type (iv).
The set of points of type (iv) or (v) is called the \textit{boundary of $X$} and denoted by $\partial X$.

\begin{figure}[htbp]
\includegraphics[scale=0.6, bb=129 541 543 714]{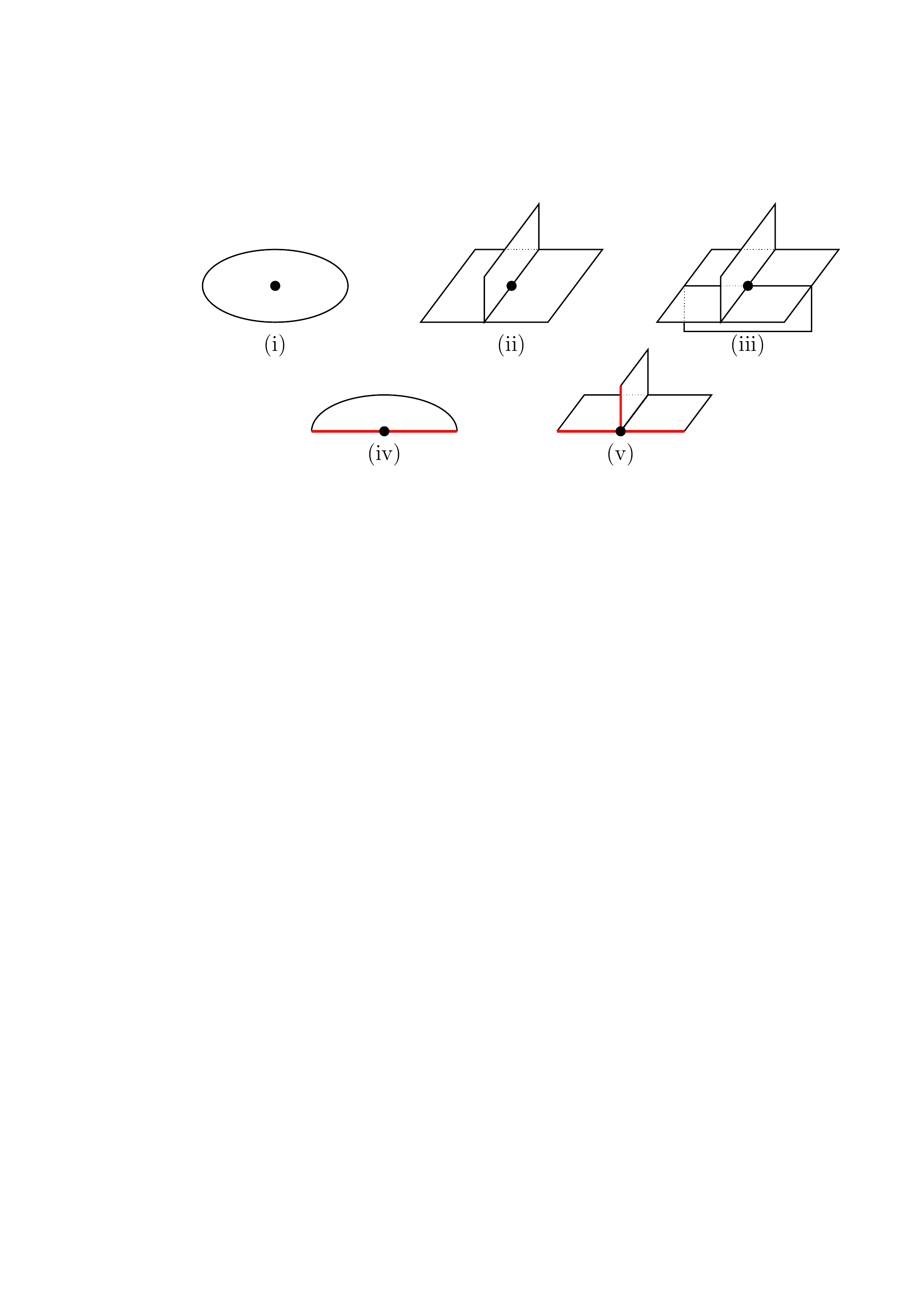}
\caption{The local models of a simple polyhedron.}
\label{fig17}
\end{figure}

Let $W$ be a compact, oriented, smooth $4$-manifold with boundary $\partial W$, and $L$ be a link in $\partial W$.
A polyhedron $X$ embedded in $W$ is said to be {\it locally flat} if, for each $x\in X$, $\Nbd(x;X)$ is embedded in a smooth $3$-ball in $\Nbd(x;W)$. 
If a handle decomposition of $W$ contains neither $3$- nor $4$-handles, $W$ collapses onto a 
locally flat simple polyhedron $X$ with $L\subset \partial X$. 
Such a polyhedron is called a {\it shadow} of $(W,L)$.
To each internal region of $X$, we can assign a half integer from the embedding of $X$ in $W$ in a suitable way (see the next paragraph), called a {\it gleam}. 
Conversely, from a simple polyhedron $X$ assigned with gleams
we can recover a pair $(W,L)$ of a compact oriented $4$-manifold $W$ and a link $L\subset \partial W$ so that $X$ is a shadow of $(W,L)$ and the assigned half integers are the gleams. 
This method is called {\it Turaev's reconstruction}, see~\cite{Tur92, Tur94, Cos05}. 
We call the assignment of a half-integer to each internal region of $X$ as above, that is, a function from the set of internal regions of $X$ to $\frac{1}{2}\Integer$, a {\it gleam function} and denote it by $\gl$.
A simple polyhedron $X$ with a gleam function $\gl$ is called a {\it shadowed polyhedron} and denoted by $(X,\gl)$.
The half integer assigned to each region $R$ of $X$ by $\gl$ is called a gleam on $R$ and denoted by $\gl(R)$.

We explain how to 
assign
the gleam 
to an internal region of a shadow in 
an oriented 4-manifold.
Set $W_S=\Nbd(\Sing(X);W)$ and $X_S=X\cap W_S$. 
Since $X$ is locally flat, there exists a possibly non-orientable 3-dimensional 
handlebody $H_S$ in $W_S$ 
such that $\partial X_S = X_S \cap \partial H_S$, $\partial H_S = H_S \cap \partial W_S$ and $H_S$ collapses onto $X_S$.
See \cite{Mar05} for details. 
Let $R$ be an internal region. We define the {\it reference framing} of $R$ as 
$B_R=\Nbd(\partial(R\setminus \Int X_S) ; \partial H_S)$. 
Note that $B_R$ is a union of some annuli or M\"obius bands embedded in 
the solid tori $\Nbd(\partial X_S; \partial W_S)$
and also that the number of the M\"obius bands is determined only by the topological type of $X$. 
This number modulo $2$ is called the {\it $\Integer_2$-gleam} of $R$. 
Regard the (abstract) oriented $[-1,1]$-bundle over $(R\setminus \Int X_S) \times [0,1]$
as the disk bundle over $R\setminus \Int X_S$ and fix
an orientation preserving bundle isomorphism $\phi$ from this bundle to the normal bundle of 
$R\setminus \Int X_S$ in $W$.
The map $\phi$ sends the image of the zero section of the $[-1,1]$-bundle over $\partial(R\setminus \Int X_S) \times [0,1])$ to a union of some annuli in $\partial W_S$. We denote the union of these annuli by $B_R'$. 
The gleam $\gl(R)$ is then given as the number of times that 
$B_R'$ rotates with respect to $B_R$ in the solid tori $\Nbd(\partial X_S;\partial W_S)$.
More precisely, $\Nbd(\partial X_S;\partial W_S)$ inherits the orientation of $W$, and so the tori $\partial \Nbd(\partial X_S;\partial W_S)$ as well. 
The gleam $\gl(R)$ is defined to be the quarter of the intersection number of $\partial B_R'$ and $\partial B_R$ in $\partial \Nbd(\partial X_S; \partial W_S)$, where each components of $\partial B_R$ and $\partial B_R'$ are consistently oriented according to an arbitrarily fixed orientation of the circles $\partial X_S$.
Note that $\partial B_R'\cap \partial \Nbd(X_S; W_S)$ consists of two simple closed curves, while $\partial B_R\cap \partial \Nbd(X_S; W_S)$ consists of one or two simple closed curves.
The gleam $\gl(R)$ is an integer if and only if the $\Integer_2$-gleam of $R$ is $0$. 

\subsection{Immersed curve presentations}

\begin{definition}
An immersed curve $C$ on the unit disk $D$ is called an  {\it immersed curve presentation}
of a simple polyhedron $X$ if there exists a disk $\hat D$ in $X$ such that 
\begin{itemize}
\item $X\setminus \hat D$ is a disjoint union of copies of $S^1\times (0,1]$ whose closures do not intersect the boundary $\partial \hat D$ of $\hat D$ and
\item the pair $(D,C)$ is homeomorphic to the pair $(\hat D,\Sing(X))$. 
\end{itemize}
\end{definition}
 
An example of an immersed curve presentation of a simple polyhedron is given in Figure~\ref{fig1}. The polyhedron given by this immersed curve presentation is obtained from the disk $D$ by attaching an annulus along the immersed curve.

\begin{figure}[htbp]
\begin{center}
$ $ \phantom{aaaaaaaaa}\includegraphics[width=6.5cm, bb=160 531 407 712]{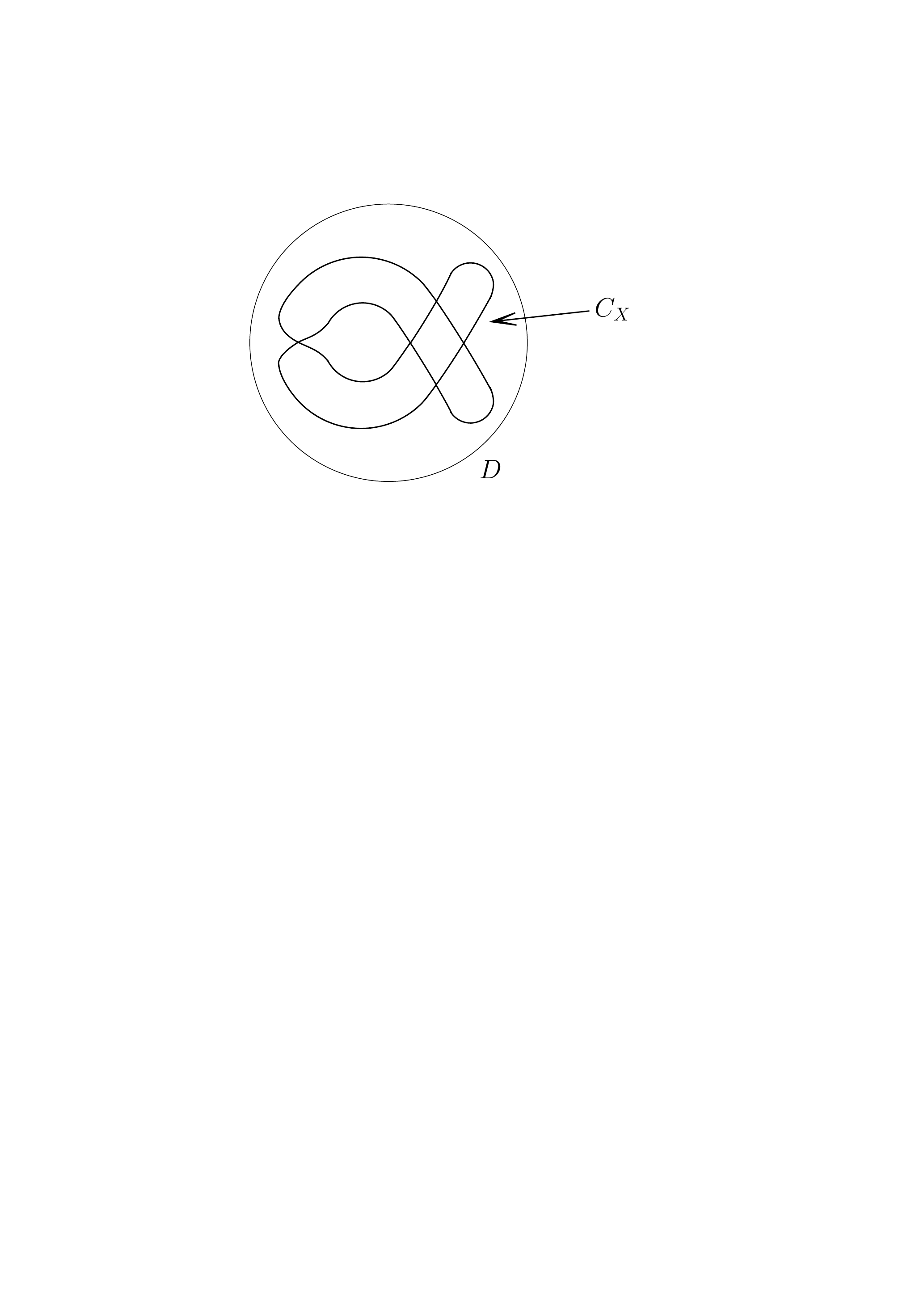}
\caption{An immersed curve presentation $C_X$.}\label{fig1}
\end{center}
\end{figure}

Let $C_X$ be an immersed curve presentation of a simple polyhedron $X$.
Hereafter we always assume that $C_X$ is {\it connected}, that is, $\Sing(X)$ is connected, for simplicity.
Note that the internal regions of $X$ are the regions of $X$ on $D$ that do not intersect the boundary $\partial D$ of $D$. 

\begin{definition}
A disjoint union $A$ of trees embedded in $D$ is called 
a {\it system of cutting trees} of $C_X$ if it satisfies that
\begin{itemize}
\item $C_X \setminus A$ is simply-connected, 
\item exactly one of the endpoints of each tree is on $\partial D$ and the others are on the internal regions of $X$, 
\item each internal region of $X$ contains exactly one vertex of $A$,
\item each edge of $A$ intersects $C_X$ transversely at one point, and
\item $C_X\cap A$ is away from the double points of $C_X$.
\end{itemize}
An endpoint of a tree lying in the interior of a region is called a {\it terminal point}.
\end{definition}

\begin{remark}
It is possible to generalize the results in this paper to the case where $\Sing(X)$ is not connected, though  the algorithm for a presentation of the fundamental group becomes complicated slightly in that case.
\end{remark}

A system of cutting trees of the immersed curve presentation in Figure~\ref{fig1} is given in Figure~\ref{fig2a}.

\begin{figure}[htbp]
\begin{center}
$ $ \phantom{aaaaaaaaa}\includegraphics[width=7cm, bb=148 531 419 712]{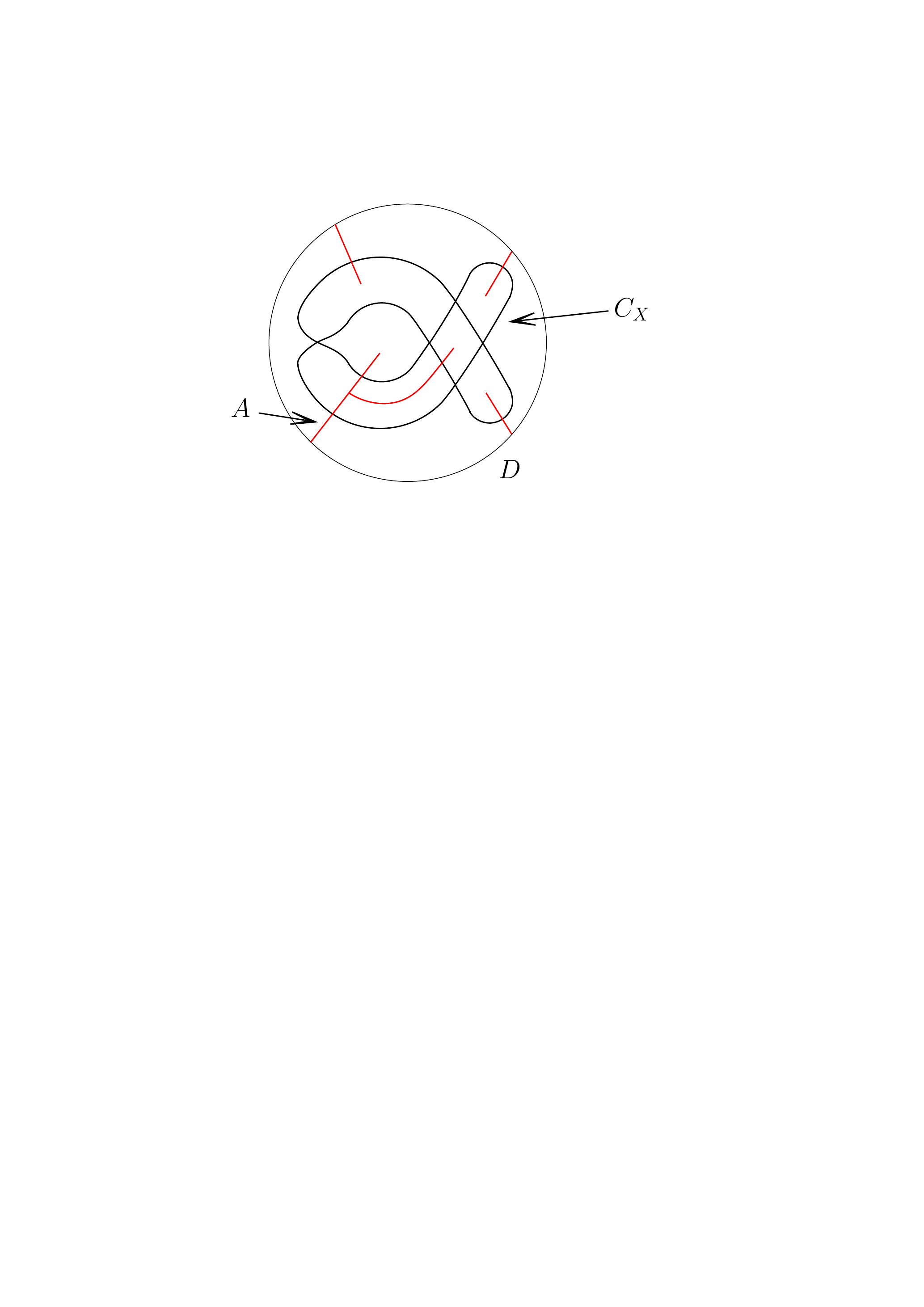}
\caption{A system of cutting trees.}\label{fig2a}
\end{center}
\end{figure}

\subsection{Link diagram presentations}\label{sec22}

Next we add over/under information to each double point of $C_X$ arbitrarily. 
We call the obtained diagram a {\it link diagram presentation} of $X$ and denote it by $D_X$. 
A union of trees on $D$ is called a {\it system of cutting trees} of $D_X$ if it is a system of cutting trees of $C_X$. 

The diagram $D_X$ consists of a finite number of arcs described on $D$. In this paper, an arc of a diagram is called a {\it strand}. A {\it crossing point} of $D_X$ means the point on $D_X$ corresponding to a double point of $C_X$. The intersection of $D_X$ and
a neighborhood of a crossing point of $D_X$ consists of three subarcs of strands. 
The subarc intersecting the crossing point is called the {\it overstrand} and the other two subarcs
are called the {\it understrands} of the crossing point.

Let $D_X$ be a link diagram presentation of $X$. For each internal region $R$ of $X$, let $c(R)$ be the sum of local contributions at the vertices of $X$ on the boundary of $R$ given as in Figure~\ref{fig4},
where the over/under information is that of $D_X$. This rule is the same as the one for shadow projections of links in~\cite{Tur94}.

\begin{figure}[htbp]
\begin{center}
\includegraphics[width=25mm, bb=259 637 334 713]{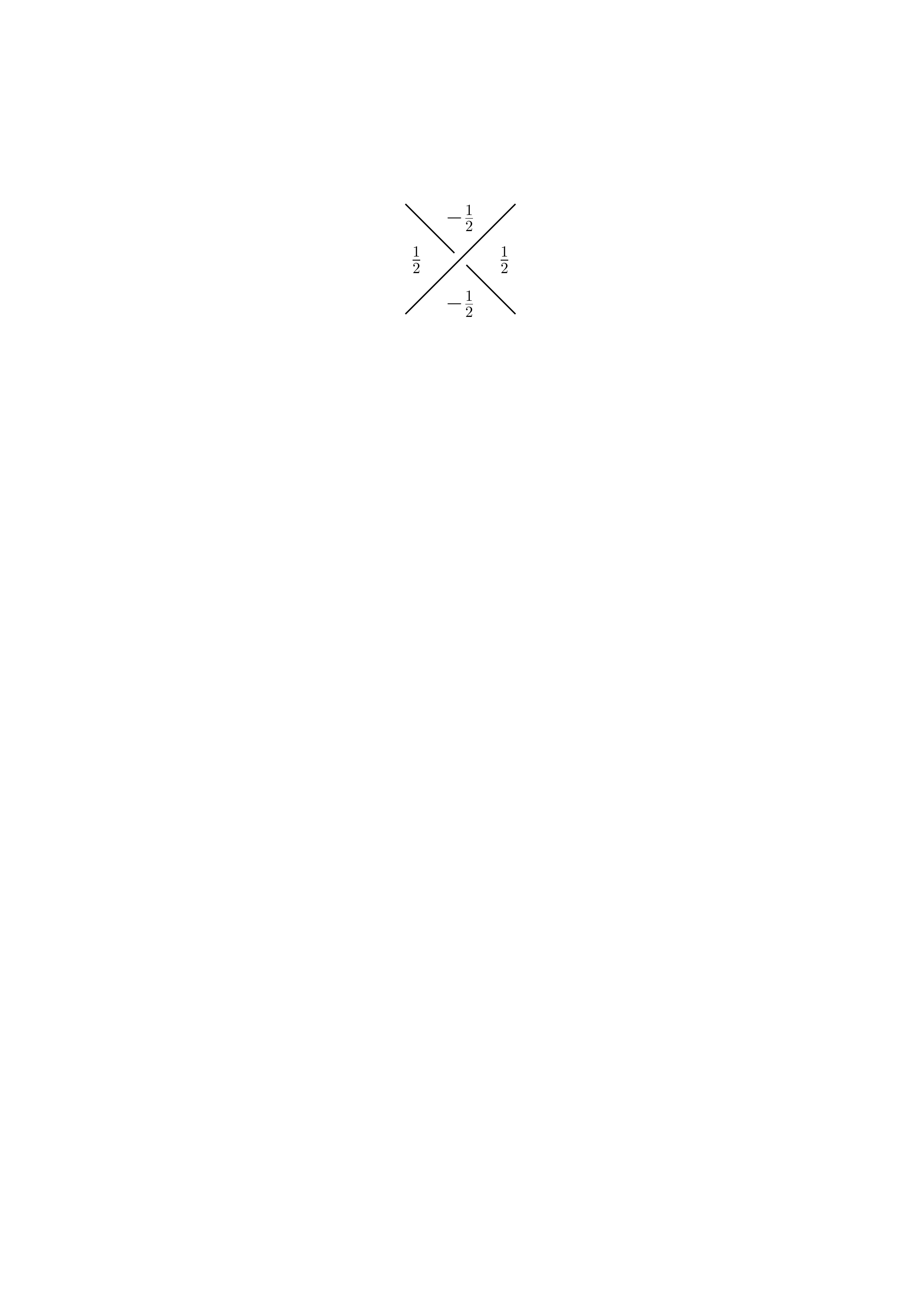}
\caption{Local contribution to $c(R)$.}\label{fig4}
\end{center}
\end{figure}

\subsection{Meridians}\label{sec24}

Let $(X,\gl)$ be a shadowed polyhedron with an immersed curve presentation. 
The $4$-manifold of $(X,\gl)$ is always a $4$-ball, which we denote by $B^4$.
Let $R_1, R_2, \ldots, R_{n_0}$ be the regions of $X$ on $D$,
and set $\mathcal R_i= R_i \setminus \Nbd(C_X;D)$ for $i=1,\ldots, n_0$. 
The $4$-ball $B^4$ of $(X,\gl)$ is obtained from $\Nbd(C_X;D)\times D'$ by attaching 
$\overline{\mathcal R_i}\times D'$ for $i=1,\ldots,n_0$ using the gluing maps determined by $\gl$, where $D'$ is the unit disk on $\Real^2$
and $\overline{\mathcal R_i}$ is the closure of $\mathcal R_i$ in $D$. 
The regions of $X$ not lying on $D$ are a finite copies of $S^1\times (0,1]$ attached to $D$ along $C_X$.

By a {\it meridian} of a region $R$ of a shadow $X$ of a $4$-manifold $W$, we mean a based closed path in $W\setminus X$, with base point $b\in W\setminus X$, 
that is freely homotopic to a simple loop bounding a disk that intersects $X$ once at 
a point in the interior of $R$ transversely. Our main theorem says that for a subpolyhedron $Y$ of $X$, $\pi_1(W\setminus Y, b)$ has a finite presentation whose generator set consists of meridians of regions.

To define the meridians of our presentation, 
we first fix the position of the part of $X$ in $\Nbd(C_X;D)\times D'$ explicitly. 
Let $U_\varepsilon$ be the union of small disks 
on $\Nbd(C_X;D)$ centered at the double points of $C_X$ with sufficiently small radius $\varepsilon>0$.
Let $D_X$ be a link diagram presentation of $X$
and we regard it as a diagram on $\Nbd(C_X;D)$ by restriction. 
Then we fix the positions of $S^1\times (0,1]$
in $\Nbd(C_X;D)\times D'$ 
so that
\begin{itemize}
\item[(i)] 
the part of $S^1\times (0,1]$ outside $U_\varepsilon\times D'$ 
lies on $C_X\times \{(u_1',0)\in D'\mid 0<u_1'\leq 1\}$,
\item[(ii)] the part of  $S^1\times (0,1]$ in $U_\varepsilon\times D'$ 
corresponding to the overstrands of $D_X$ is $\bar a  \times \{(u_1',0)\in D'\mid 0<u_1'\leq 1\}$, 
where $\bar a$ is the set of arcs on  $C_X\cap U_\varepsilon$ corresponding to the 
overstrands of $D_X$,
and
\item[(iii)]  the part of  $S^1\times (0,1]$ in $U_\varepsilon\times D'$ corresponding to the 
understrands 
of crossing points
of $D_X$ is 
\[
\begin{split}
\{(u_1, u_2, u_1', u_2') &\in \Nbd(C_X;D)\times D'\mid \\
&(u_1,u_2)\in {\underline{a}}\cap U_\varepsilon,\; u'_1+\sqrt{-1}u'_2=te^{-\pi\sqrt{-1}\chi(r)},\;  0<t\leq 1\},
\end{split}
\]
where $\underline{a}$ is the set of arcs on  $C_X\cap U_\varepsilon$ corresponding to the 
understrands of $D_X$, 
$(r,\theta)$ are the polar coordinates on each disk of $U_\varepsilon$, and $\chi:[0,\varepsilon]\to [0,1]$ is a smooth bump function that is  $1$ near $r=0$ and $0$ near $r=\varepsilon$.
\end{itemize}

The meridians of regions of $X$ on $D$ are defined as follows. 
Let $A$ be a system of cutting trees of $C_X$ and set $T=C_X\setminus A$,
where $T$ is a tree with open endpoints whose vertices are the vertices of $X$.
Fix a base point $b$ on 
$(T\setminus U_\varepsilon) \times \{(0,1)\}\subset \Nbd(C_X;D)\times D'$.
Assume that $\Nbd(C_X;D)$ is sufficiently narrow so that the terminal points of the cutting trees are on $D\setminus \Nbd(C_X;D)$.
The union $A$ of cutting trees may decompose $R_i$ and $\mathcal R_i$ into open disks, which we denote by $R_{i1}, \ldots, R_{in_i}$ and $\mathcal R_{i1}, \ldots, \mathcal R_{in_i}$, respectively.
Choose a point $p_{ij}$ on $R_{ij}\setminus \mathcal R_{ij}$ and a point $q_{ij}$ on an edge of $T$ adjacent to $R_{ij}$.

The {\it meridian of $R_{ij}$} is defined to be the loop concatenating the minimal path $\omega_{ij}$ on $T\times \{(0,1)\}$ from the base point $b$ to $(q_{ij}, (0,1))$, the straight path $\omega'_{ij}$ from $(q_{ij}, (0,1))$ 
to $(p_{ij}, (0,1))$, the circle path on $\{p_{ij}\}\times\partial D'$ parametrized 
from $\theta=\frac{\pi}{2}$ to $\frac{5\pi}{2}$ as $(p_{ij}, e^{\sqrt{-1}\theta})$, 
the inverse path of $\omega'_{ij}$ and then the inverse path of $\omega_{ij}$.
Note that the homotopy class of the loop does not depend on the choice of the points $p_{ij}$ and $q_{ij}$. 

To define the meridians of the edges of $T$, we  need to assign orientations to the immersed curves of $C_X$. 
Let $\vec{C}_X$ be the union of immersed curves of $C_X$ oriented arbitrarily, which we call 
an {\it oriented immersed curve presentation} of $X$.  
For each edge $e_i$ of $T$, let $y_{l_i}$ and $y_{r_i}$ be the meridians of the regions on the left and right, respectively, with respect to $e_i$ equipped with the orientation consistent with that of $\vec{C}_X$.
The {\it meridian of $e_i$} is defined to be the loop shown on the left in Figure~\ref{fig6b}, which satisfies $x_i=y_{r_i}^{-1}y_{l_i}$. 
We write down these meridians for each edge of $T$ as shown on the right.

\begin{figure}[htbp]
\begin{center}
\includegraphics[width=12cm, bb=130 601 454 712]{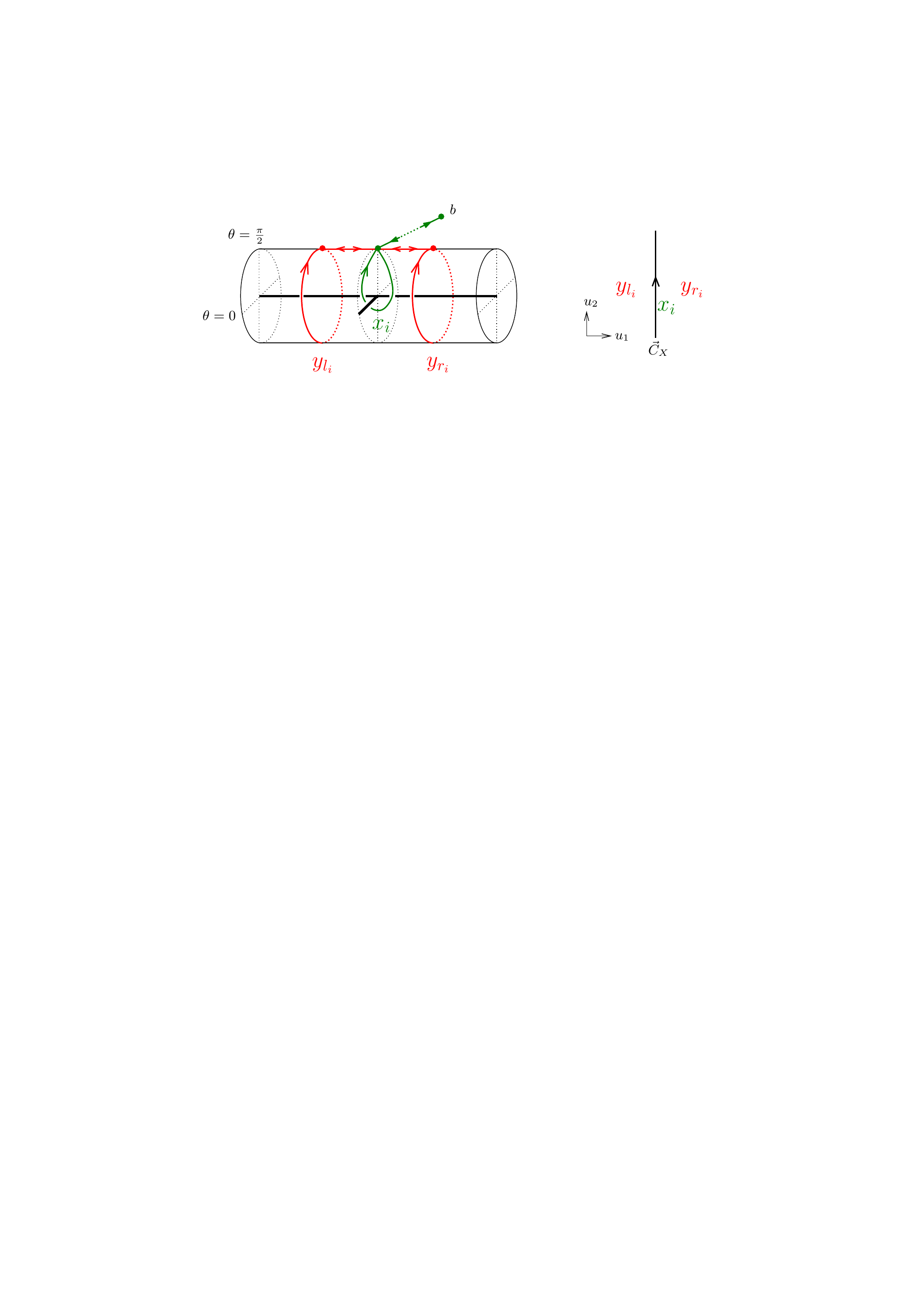}
\caption{Meridians of edges of $T$.}\label{fig6b}
\end{center}
\end{figure}

\begin{lemma}
Let $e$ and $e'$ be edges of $T$ adjacent to the same vertex of $T$.
Suppose that both $e$ and $e'$ correspond to the overstrand of $D_X$ at the crossing corresponding to the vertex. Then their meridians are homotopic
in $B^4\setminus X$.
\end{lemma}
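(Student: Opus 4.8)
The plan is to realise both meridians as small circles linking the single unbroken sheet of $X$ that lies over the overstrand, and then to slide one such circle along that sheet, across the crossing region, until it becomes the other. This is the shadow analogue of the elementary fact in knot theory that the meridian of an overstrand is unchanged as it passes a crossing.

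First I would pin down the local model at the vertex $v$ using the positions (i)--(iii). Choose coordinates on $U_\varepsilon$ so that the overstrand runs along the axis $\{(u_1,0)\}$ and the understrand along $\{(0,u_2)\}$, with $(u_1',u_2')$ the coordinates on the fibre $D'$. By (ii) the part of $X$ lying over the overstrand is the flat sheet $S=\{(u_1,0,u_1',0)\mid 0<u_1'\le 1\}$, sitting over the positive real axis of $D'$ for \emph{every} value of $u_1$; the two edges $e,e'$ are exactly the two components of the overstrand on the two sides of $v$. By (iii) the sheet over the understrand lies over the ray of $D'$ at angle $-\pi\chi(r)$, and as $r$ runs from $0$ to $\varepsilon$ this angle runs monotonically from $-\pi$ (at $v$) to $0$; hence over all of $U_\varepsilon$ the understrand sheet is confined to the closed lower half $\{u_2'\le 0\}$ of the fibre, and near $v$, where $\chi\simeq 1$, it sits over the negative real axis, i.e.\ at $u_1'<0$. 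The disk $D$ itself sits at $u_1'=u_2'=0$.

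Next I would describe the meridian of an edge concretely as a small circle in the $2$-plane normal to $S$. At a point $(u_1^0,0)$ of the overstrand this normal plane is the $(u_2,u_2')$-plane, so I take the loop $\gamma(u_1^0)=\{(u_1^0,\delta\cos\phi,u_1^*,\delta\sin\phi)\mid 0\le\phi\le 2\pi\}$ at a fixed height $0<u_1^*<1$ with small $\delta>0$, tied to the base point $b$ by a path in $T\times\{(0,1)\}$ together with a short fibre arc; this is the loop of Figure~\ref{fig6b} representing $y_{r_i}^{-1}y_{l_i}$. The homotopy I want is the obvious one: let $u_1^0$ move from a point of $e$ to a point of $e'$, that is, pass through $0$, while $u_1^*$ and $\delta$ stay fixed, and drag the connecting data through $v$ inside $T\times\{(0,1)\}$, which stays off $X$.

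The main point of the argument -- and the only place where anything needs checking -- is that $\gamma(u_1^0)$ remains in $B^4\setminus X$ throughout the slide. Over a base point of $\gamma(u_1^0)$ the set $X\cap(\{\mathrm{pt}\}\times D')$ is the origin (the disk) together with, when that base point lies on a strand, a ray of the corresponding annulus sheet. Since $\gamma(u_1^0)$ is held at $u_1'=u_1^*>0$ it misses the disk; it misses $S$, which lies at $u_2'=0$, because the only base points meeting the overstrand occur at $\phi=\tfrac{\pi}{2},\tfrac{3\pi}{2}$, where $u_2'=\pm\delta\ne0$; and it misses the understrand sheet, whose relevant base points occur only at $u_1^0=0$ (at distance $r\le\delta$ from $v$), where that sheet lies at $u_1'=t\cos(\pi\chi(r))<0$ once $\delta$ is chosen small enough that $\chi(\delta)>\tfrac12$. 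Thus the expected obstacle, namely a collision with the dipping understrand as the circle is carried across the crossing, does not arise: convention (iii) pushes the understrand into the lower half of the fibre precisely so that a normal circle to $S$ at positive height $u_1^*$ can glide past it. Finally, because $e$ and $e'$ meet at $v$, the two connecting paths agree beyond $v$, so this is a genuine based homotopy and $x_e$ and $x_{e'}$ are homotopic in $B^4\setminus X$.
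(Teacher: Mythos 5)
Your argument is correct and is exactly the paper's (one-line) proof carried out in detail: the paper simply cites condition (ii) --- the overstrand sheet is flat and unbroken across the crossing --- together with the positions of the meridian loops, and you make this explicit by sliding a normal circle along that sheet and checking, via condition (iii), that it clears the understrand sheet confined to the lower half of the fibre. No discrepancy with the paper's approach.
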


\begin{proof}
It follows from the condition~(ii) of the positions of $S^1\times (0,1]$ corresponding to the overstrand
and the positions of the loops of the meridians.
\end{proof}

We can obtain an oriented link diagram from $D_X$ by assigning the orientation of $\vec{C}_X$ to the strands of $D_X$. To simplify the notation, we denote this oriented link diagram again by $D_X$. Due to the above lemma, the meridian of a strand of $D_X\setminus A$ is well-defined even if it passes through crossings as overstrands.

\section{Presentations of fundamental groups}

\subsection{Main result} 

To state our main theorem, we introduce some terminologies. 
Let $c_j$ be an intersection point of $D_X$ and $A$, called a {\it cutting point}, and let $A_j$ be the tree in $A$ containing $c_j$.
The point $c_j$ decomposes $A_j$ into two subtrees of $A_j$, and we denote the one not containing the point on $\partial D$ by $A_j'$. Let $R_j$ be the region of $X$ on $D$ that is adjacent to $c_j$ and intersects $A_j'$.
The point $c_j$ cuts the strand of $D_X$ containing $c_j$ into two arcs, which are stands of $D_X\setminus A$,  and we order them according to the counterclockwise orientation on the boundary of $R_j$. We call the first strand the {\it backward strand at $c_j$} and the second one the {\it forward strand at $c_j$}, 
see Figure~\ref{fig3}. We call $c_j$ the {\it cutting point of the region $R_j$}.

\begin{figure}[htbp]
\begin{center}
\includegraphics[width=9cm, bb=173 576 410 711]{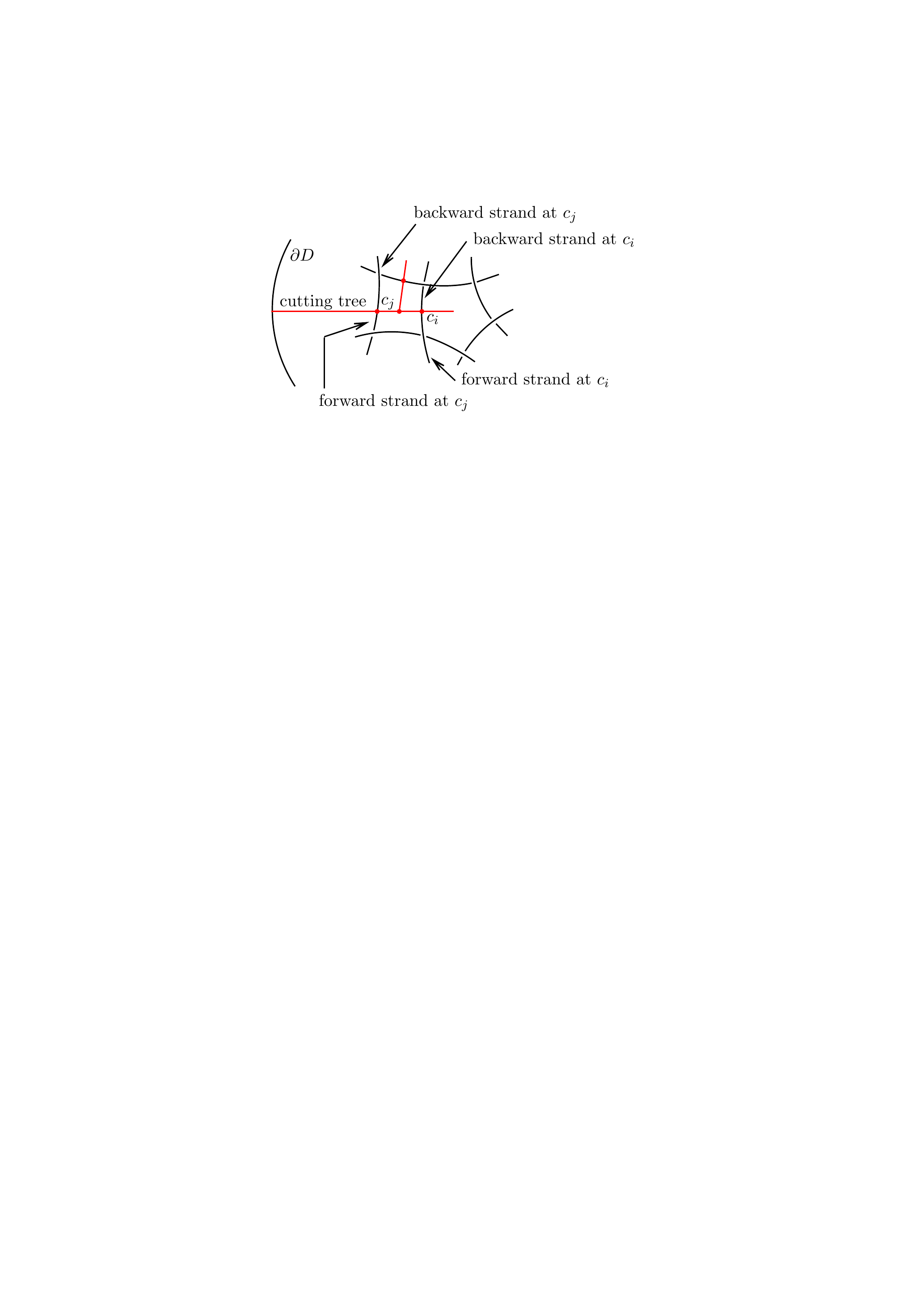}
\caption{Forward and backward strands at intersection points of $D_X$ and $A$.}\label{fig3}
\end{center}
\end{figure}

Let $Y$ be a simple subpolyhedron of $X$ satisfying $X\setminus D\subset Y$.
Here $Y$ is called a {\it subpolyhedron} of $X$ if $Y$ is obtained from $X$ by removing some regions and edges of $X$. 
Let $R_{j_1}, R_{j_2}, \ldots, R_{j_k}$ be the regions of $Y$ on $D$ intersecting $A_j'$. 
Let $\Nbd(A_j';D)$ be a small neighborhood of $A_j'$ in $D$. 
Its boundary $\partial \Nbd(A_j';D)$ is a simple closed curve that intersects all the regions $R_{j_1}, R_{j_2}, \ldots, R_{j_k}$. 
We give the suffices $j_1, j_2,\ldots, j_k$ according to the following rule. 
Take a point on  $\partial \Nbd(A_j';D)$ near $c_j$ and
travel counterclockwise on the curve $\partial \Nbd(A_j';D)$. 
If $1\leq k'<k''\leq k$, then the curve meets $R_{j_{k'}}$ for the first time before it meets $R_{j_{k''}}$ for the first time. See Figure~\ref{fig3-2}. 
We say 
the ordering of the regions of $Y$ intersecting $A'_j$ given by the above rule on the suffices
the {\it counterclockwise ordering}.

\begin{figure}[htbp]
\begin{center}
\includegraphics[width=7cm, bb=195 596 364 711]{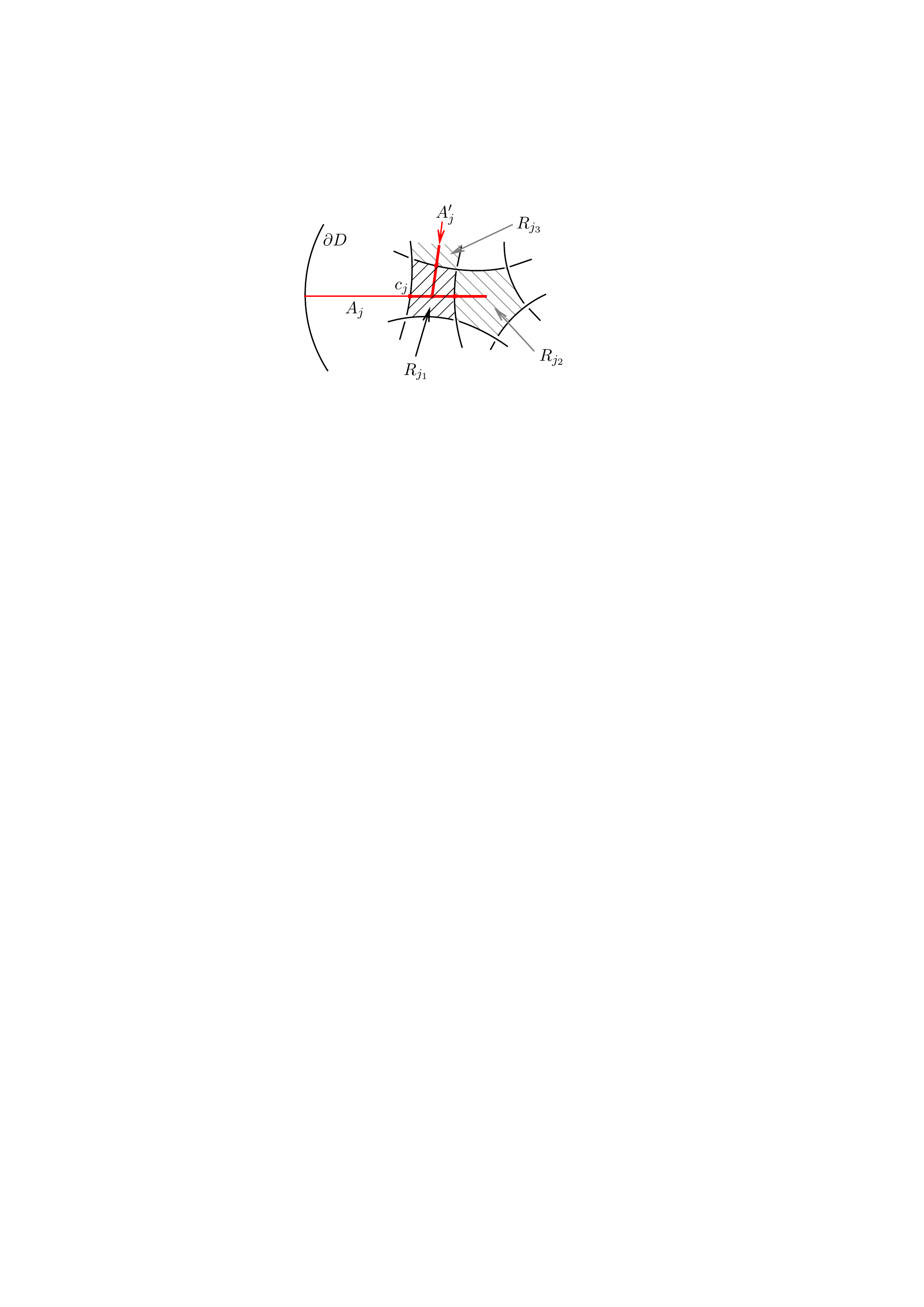}
\caption{The subtree $A_j'$ and the ordered regions $R_{j_1}, R_{j_2}, R_{j_3},\ldots$ The thickened tree in the figure is $A_j'$.}\label{fig3-2}
\end{center}
\end{figure}

Now we state the main theorem.

\begin{theorem}\label{thm1}
Let $(X,\gl)$ be a shadowed polyhedron with an oriented link diagram presentation $D_X$ on a disk $D$.
Let $Y$ be a simple subpolyhedron of $X$ satisfying $X\setminus D\subset Y$. 
Then, for a system $A$ of cutting trees of $D_X$, we have 
\[ 
\pi_1(B^4\setminus Y)\cong
\langle
x_1, \ldots, x_n, y_1,\ldots,y_m \mid
s_1,\ldots, s_{n'},
t_1,\ldots, t_{n''}
\rangle,
\]
where $x_1,\ldots, x_n$ are the meridians of the strands of $D_X\setminus A$, $y_1,\ldots, y_m$ are the meridians of the regions of $Y\setminus A$ on $D$,  
$s_i=y_{r_i}x_iy_{l_i}^{-1}$ is the 
relator obtained for each edge $e_i$ of $T$, where $y_{l_i}=1$ {\rm (}resp. $y_{r_i}=1$\,{\rm )} if the region on the left {\rm (}resp. right\,{\rm )} of $e_i$ is not contained in $Y$,
and 
$t_j=\gamma_j x_{f_j}\gamma_j^{-1}x_{b_j}^{-1}$ is the 
relator 
obtained for each cutting point $c_j$, where
$x_{f_j}$ is the meridian of the forward strand and $x_{b_j}$ is that of the backward strand at $c_j$ and
\begin{equation}\label{eq103}
\gamma_j=y_{\varphi(j_k)}^{\gl(R_{j_k})-c(R_{j_k})}\cdots y_{\varphi(j_2)}^{\gl(R_{j_2})-c(R_{j_2})}y_{\varphi(j_1)}^{\gl(R_{j_1})-c(R_{j_1})},
\end{equation}
where
\begin{itemize}
\item $c(R)$ is the sum of local contributions to a region $R$ introduced in Section~\ref{sec22},
\item $R_{j_1}, R_{j_2}, \ldots, R_{j_k}$ are the regions of $Y$ on $D$ intersecting the subtree $A_j'$ aligned in  counterclockwise ordering, and 
\item 
$y_{\varphi(j_1)}, y_{\varphi(j_2)}, \ldots, y_{\varphi(j_k)}$ 
are the meridians of the regions of $Y\setminus A$ contained in $R_{j_1}, R_{j_2}, \ldots, R_{j_k}$ and adjacent to the forward strands at the cutting points of $R_{j_1}, R_{j_2}, \ldots, R_{j_k}$, respectively. Here
$\varphi$ is the map that sends the suffix of $R_j$ to the suffix of the region of $Y\setminus A$ contained in $R_j$ and adjacent to the forward strand at the cutting point of $R_j$.
\end{itemize}
\end{theorem}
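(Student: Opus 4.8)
The plan is to build $\pi_1(B^4\setminus Y)$ by applying the van Kampen theorem to the decomposition of $B^4$ dictated by its construction from $\Nbd(C_X;D)\times D'$ and the region pieces $\overline{\mathcal R_i}\times D'$, organized along the cut structure supplied by the system $A$ of cutting trees. The two families of relators should arise from two separate stages: the edge relators $s_i$ from computing the complement of the \emph{cut-open} polyhedron, and the cutting-point relators $t_j$ from re-gluing along $A$.

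First I would cut everything open along $A$. Cutting $C_X$ along $A$ turns it into the tree $T$, and cuts each $R_i$ and $\mathcal R_i$ into the disks $R_{ij}$ and $\mathcal R_{ij}$; over this simply connected configuration the corresponding piece of $B^4$ is a ball. I would cover the complement of the cut-open $Y$ by the pieces lying over the edges $e_i$ of $T$ together with neighborhoods of the region disks of $Y$. Over a single edge, away from crossings, the local model is $(\text{arc})\times D'$ in which the annulus sheet punctures each fiber on the positive real ray (by condition~(ii)) and the two adjacent region sheets puncture at the center of $D'$; van Kampen on this model produces exactly the generators $x_i,y_{l_i},y_{r_i}$ and the relator $s_i=y_{r_i}x_iy_{l_i}^{-1}$, matching the definition $x_i=y_{r_i}^{-1}y_{l_i}$ of Figure~\ref{fig6b}. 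If an adjacent region is not in $Y$ its sheet is absent, so its meridian is null-homotopic and set to $1$, which is why $y_{l_i}$ or $y_{r_i}$ may drop out. At a vertex of $T$ (a crossing of $D_X$) the edge pieces are glued: the Lemma guarantees that the two overstrand edges carry the same meridian, so no new generator is introduced, and the remaining local identification of understrand meridians is consistent with the edge relators together with the region adjacencies. Terminal points and the chosen base fiber are contractible and only serve to fix the base point. The output of this stage is the group $\langle x_i,y_j\mid s_1,\dots,s_{n'}\rangle$.

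Next I would re-glue along $A$. Each cutting point $c_j$ reconnects the strand from which it was cut, joining its backward strand to its forward strand; van Kampen across this reconnection identifies $x_{b_j}$ with a conjugate of $x_{f_j}$, yielding a relator $t_j=\gamma_jx_{f_j}\gamma_j^{-1}x_{b_j}^{-1}$. The conjugator $\gamma_j$ records the loop that the reconnecting annulus traces as it is carried once around the subtree $A_j'$: travelling along $\partial\Nbd(A_j';D)$ the loop crosses the regions $R_{j_1},\dots,R_{j_k}$ in counterclockwise order, and each crossing of $R_{j_\ell}$ contributes a power of the meridian $y_{\varphi(j_\ell)}$ of the forward-adjacent region there. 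The exponent is read off from the gluing map of $\overline{\mathcal R_{j_\ell}}\times D'$: the gleam $\gl(R_{j_\ell})$ measures the rotation built into that gluing by Turaev's reconstruction, while $-c(R_{j_\ell})$ is the correction coming from the over/under data at the crossings on $\partial R_{j_\ell}$, exactly the local-contribution rule of Figure~\ref{fig4}. Assembling these in counterclockwise order gives the expression~\eqref{eq103}. A final application of van Kampen combining the cut-open group with these re-gluings produces the claimed presentation on $x_1,\dots,x_n,y_1,\dots,y_m$ with relators $s_i$ and $t_j$.

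The hard part will be the exact computation of $\gamma_j$. One must verify that the monodromy of the reconnecting loop around $A_j'$ is precisely the ordered product of the $y_{\varphi(j_\ell)}^{\gl(R_{j_\ell})-c(R_{j_\ell})}$, with the correct signs and in the order forced by the counterclockwise ordering of the regions. This means pinning down the gleam-twisted gluing maps of the region pieces coming from Turaev's reconstruction, tracking how the understrand dips of condition~(iii) accumulate the $c(R)$ corrections at the crossings on each $\partial R_{j_\ell}$, and checking that the order in which the conjugating meridians are encountered along $\partial\Nbd(A_j';D)$ agrees with the exponents appearing in~\eqref{eq103}.
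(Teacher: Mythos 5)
Your overall strategy coincides with the paper's: cut along $A$, compute the complement over the tree $T$ piece by piece (edge pieces and vertex pieces) to get $\langle x_i,y_j\mid s_1,\dots,s_{n'}\rangle$, then re-glue along $A$ and attach the region blocks to produce the conjugation relators, with the exponents $\gl(R)-c(R)$ coming from comparing the gleam-twisted gluing with the rotation forced by the understrand dips of condition~(iii). The first stage and the mechanism you describe for reading off $\gamma_j$ as an ordered product $\delta_{j_k}\cdots\delta_{j_1}$ of region loops are essentially the paper's argument.

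There is, however, a genuine gap in your re-gluing step. At a cutting point $c_j$ the edge of $A_j$ crosses the strand transversely, so cutting along $A$ separates not only the strand into its forward and backward pieces but also \emph{both adjacent regions} into pieces. When you re-glue, van Kampen identifies, up to conjugation by $\gamma_j$, not just the strand meridians but also the meridians of the regions on either side of the strand: in the paper's notation one gets $y_{b_{j,l}}=\gamma_j y_{f_{j,l}}\gamma_j^{-1}$ and $y_{b_{j,r}}=\gamma_j y_{f_{j,r}}\gamma_j^{-1}$ in addition to $x_{b_j}=\gamma_j x_{f_j}\gamma_j^{-1}$. The claimed presentation contains only the relator $t_j$, so to finish one must prove that these extra region relations are consequences of $s_1,\dots,s_{n'}$ and $t_1,\dots,t_{n''}$. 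This is not automatic, and the paper spends the last third of its proof on it: an induction on the number of cutting points contained in $A_j'$, using the chain of identifications $y_{f_{j_s,1}}=y_{f_{k_1,2}}$, $y_{b_{k_t,2}}=y_{f_{k_{t+1},2}}$ around $\partial R_{j_s}$, the factorization $\gamma_{j_s}=\gamma_{k_{h'}}\cdots\gamma_{k_1}\delta_{j_s}$, and the fact that $y_{f_{j_s,1}}$ commutes with $\delta_{j_s}$. Your proposal identifies the computation of $\gamma_j$ as the hard part, but the redundancy of the region-meridian identifications is the structurally essential step you have omitted; without it the presentation you write down has too few relators to be justified by van Kampen alone.
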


\begin{proof}
We prove the assertion in the case $Y=X$. The assertion in the other cases can be proved by setting the meridians of the regions of $X$ not contained in $Y$ to be the identity.

Set $N' = \mathrm{Nbd} (C_X; D) \setminus \mathrm{Int} \, \mathrm{Nbd} (A; D )$ and $B' = N'  \times D'$. 
We first show that $\pi_1 (B' \setminus X)$ has the presentation $\langle x_1, \ldots, x_n, y_1, \ldots , y_m \mid  s_1 , \ldots, s_{n'} \rangle$. 
By the definition of a system of cutting trees, $N'$ is a closed disk, thus, $B'$ is a $4$-ball. 
We decompose $N'$ into the pieces $V_1, \ldots, V_{n_v}, E_1, \ldots, E_{n'}$, where 
$V_1, \ldots, V_{n_v}$ and $E_1, \ldots, E_{n'}$ correspond to the vertices and edges of $T=C_X\setminus A$,
respectively. 
See Figure~\ref{fig_added_2}.

\begin{figure}[htbp]
\begin{center}
\includegraphics[width=10cm, bb=186 639 395 712]{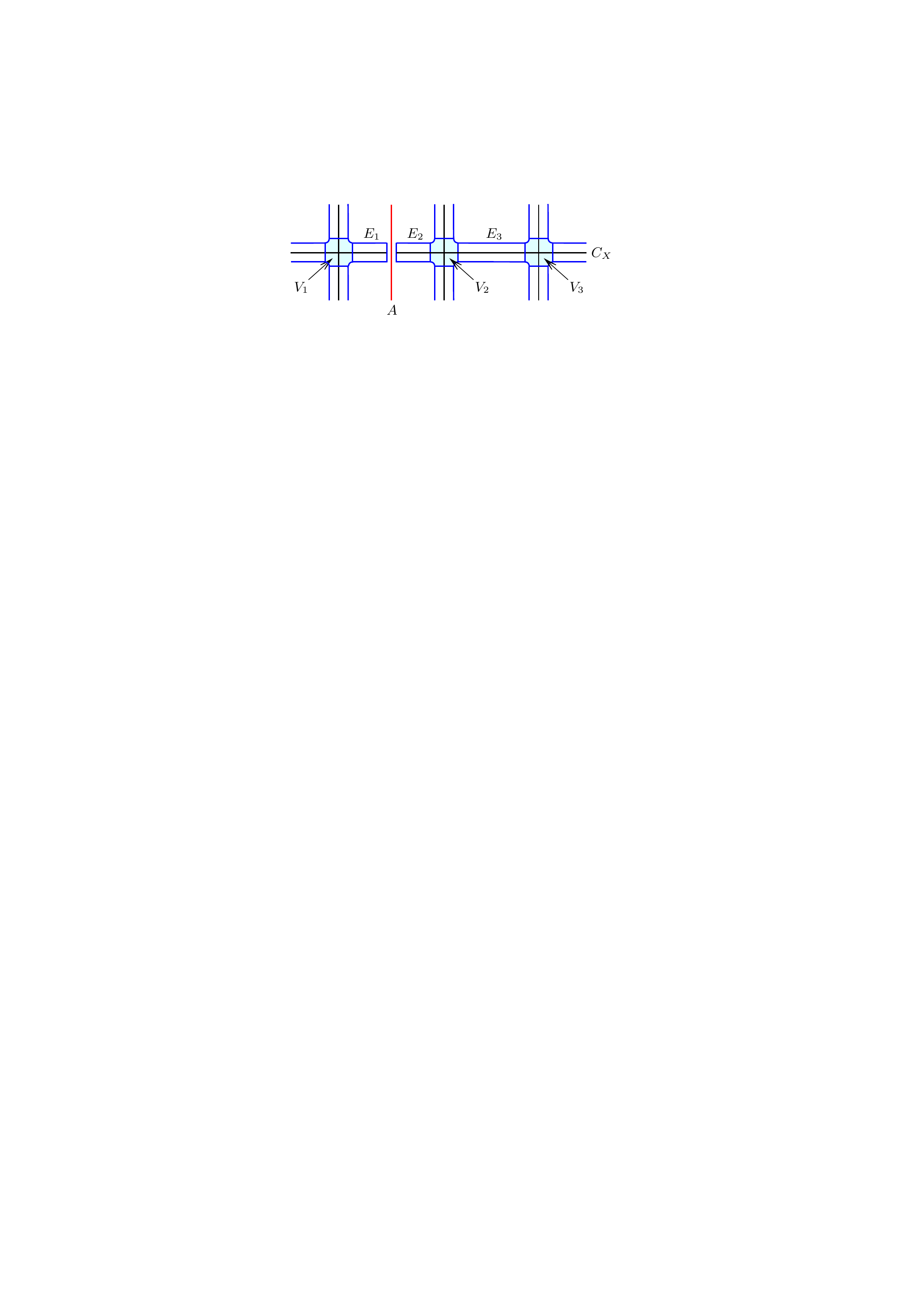}
\caption{The decomposition of $N'$ into the pieces $V_1,\ldots, V_{n_v}$ and $E_1,\ldots, E_{n'}$.}\label{fig_added_2}
\end{center}
\end{figure}

For each $1 \leq i \leq n'$, the pair $(E_i  \times D', (E_i  \times D') \cap X )$ is homeomorphic to 
the product space of the cone on $(S^2, P)$ and the interval $[0,1]$,
where $P \subset S^2$ is a 3-point set. 
Thus, the fundamental groups of $(E_i  \times D') \setminus X$ is a free group of rank $2$. 
Around the edge $e_i$ of $T$ corresponding to the piece $E_i$, there are three meridians $x_i, y_{r_i}, y_{l_i}$. 
As we have already seen in Figure~\ref{fig6b},
they satisfy the relation $x_i = y_{r_i}^{-1} y_{l_i}$, and thus, 
the product space $(E_i  \times D' ) \setminus  X $ has the following presentation: 
\[
 \langle  x_i, y_{r_i}, y_{l_i} 
\mid 
y_{r_i} x_i y_{l_i}^{-1}
\rangle,
\]
which is actually the free group of rank $2$. 

For each $1 \leq k \leq n_v$, the pair $(V_k  \times D', (V_k  \times D') \cap X )$ is homeomorphic to the cone on 
$(S^3, \Gamma)$, where $\Gamma$ is a $3$-regular graph with $4$ vertices planarly embedded in $S^3$. 
Therefore, the fundamental groups of both $(V_k  \times D') \setminus X$ and $\partial (V_k  \times D') \setminus X$ are 
the free group of rank $3$ and they can be naturally identified. 
Around the crossing point of concern, there are seven meridians. 
Suppose that the crossing of $D_X$ at the vertex is positive and we label the meridians as in 
Figure~\ref{fig25},
where
$\bar{x}$ is the meridian of the overstrand, $\underline{x}_1$ and $\underline{x}_2$ are the meridians of the understrands of the 
crossing, and $y_{i,l}$ and $y_{i,r}$ are the meridians of the regions on the left and right of the strand with the meridian 
$\underline{x}_i$ for each $i=1,2$, respectively. 
Precisely speaking, the base points
of the meridians here are different from those in the statement of the theorem, but 
we do not go into details on this difference for simplicity of exposition. 
As we have already seen in Figure~\ref{fig6b},
these meridians satisfy the relations 
$y_{1,l} \bar{x} y_{2,l}^{-1} = 1$, $y_{1,r} \bar{x} y_{2,r}^{-1} = 1$ and 
$y_{i,r} \underline{x}_i y_{i,l}^{-1} = 1$ for $i=1,2$. 
It is then easily checked that the fundamental group of $(V_k \times D')  \setminus X$ has 
the following presentation:
\[ \langle  \bar{x}, \underline{x}_1, \underline{x}_2, y_{1,l} , y_{1,r}, y_{2,l} , y_{2,r} 
\mid 
y_{1,l} \bar{x} y_{2,l}^{-1}, \,
y_{1,r} \bar{x} y_{2,r}^{-1}, \, 
y_{1,r} \underline{x}_1 y_{1,l}^{-1} , \,
y_{2,r} \underline{x}_2 y_{2,l}^{-1} , \,
y_{1,l} y_{1,r}^{-1} y_{2,r} y_{2,l}^{-1} 
\rangle ,
\]
which is actually the free group of rank $3$. 
We note that, as shown in Figure~\ref{fig26}, we have $y_{1,l} y_{1,r}^{-1} = y_{2,l} y_{2,r}^{-1}$ 
due to the condition (iii) of the positions of 
$S^1 \times (0,1]$ on $U_{\varepsilon}$. 
This relation can be derived from $y_{1,l} \bar{x} y_{2,l}^{-1} = 1$ and $y_{1,r} \bar{x} y_{2,r}^{-1} = 1$. 
The argument for the case of negative crossing runs in the same way. 

\begin{figure}[htbp]
\begin{center}
\includegraphics[width=3cm, bb=200 614 300 710]{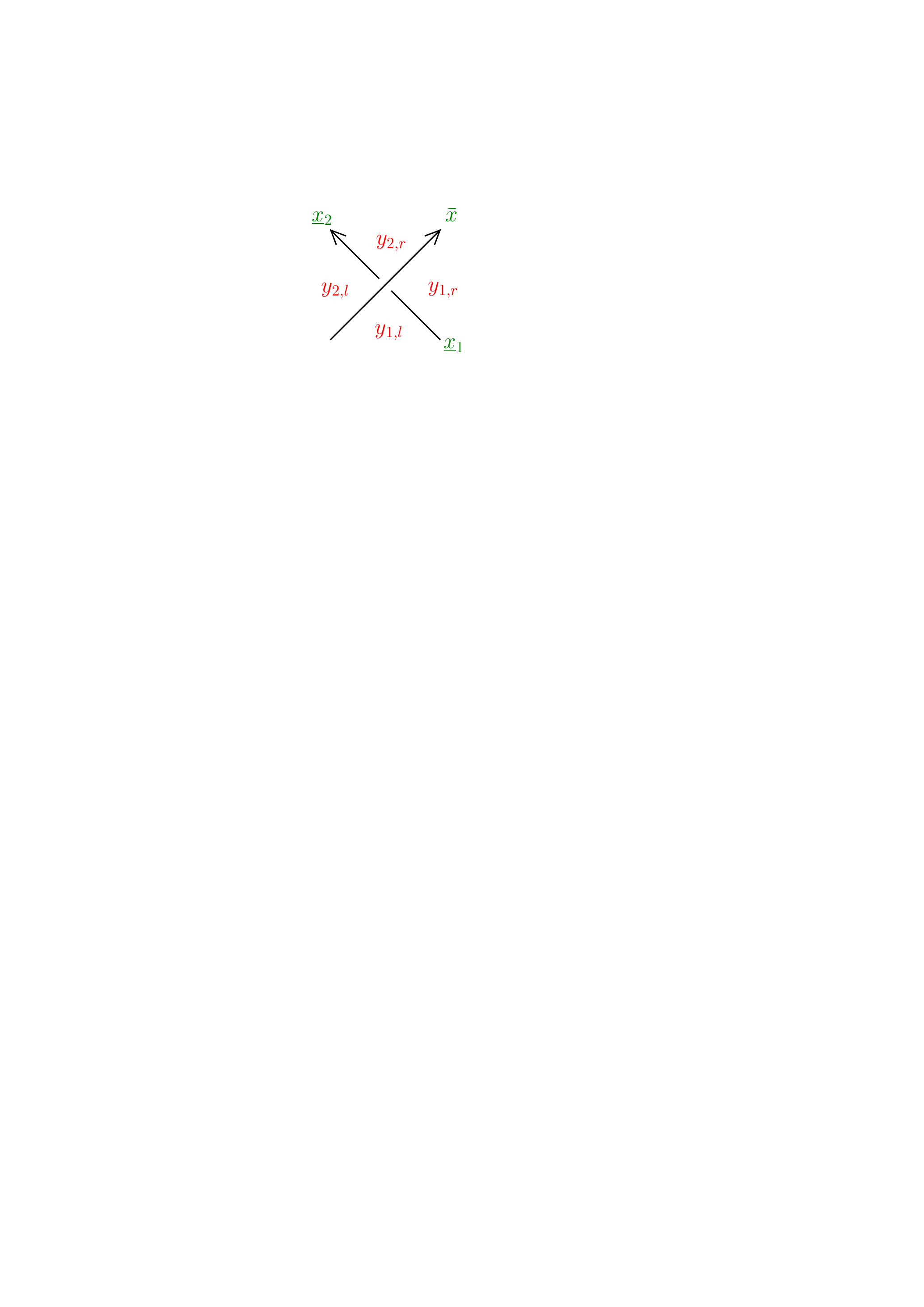}
\caption{Meridians around a vertex.}\label{fig25}
\end{center}
\end{figure}

\begin{figure}[htbp]
\begin{center}
\includegraphics[width=12.5cm, bb=164 373 404 712]{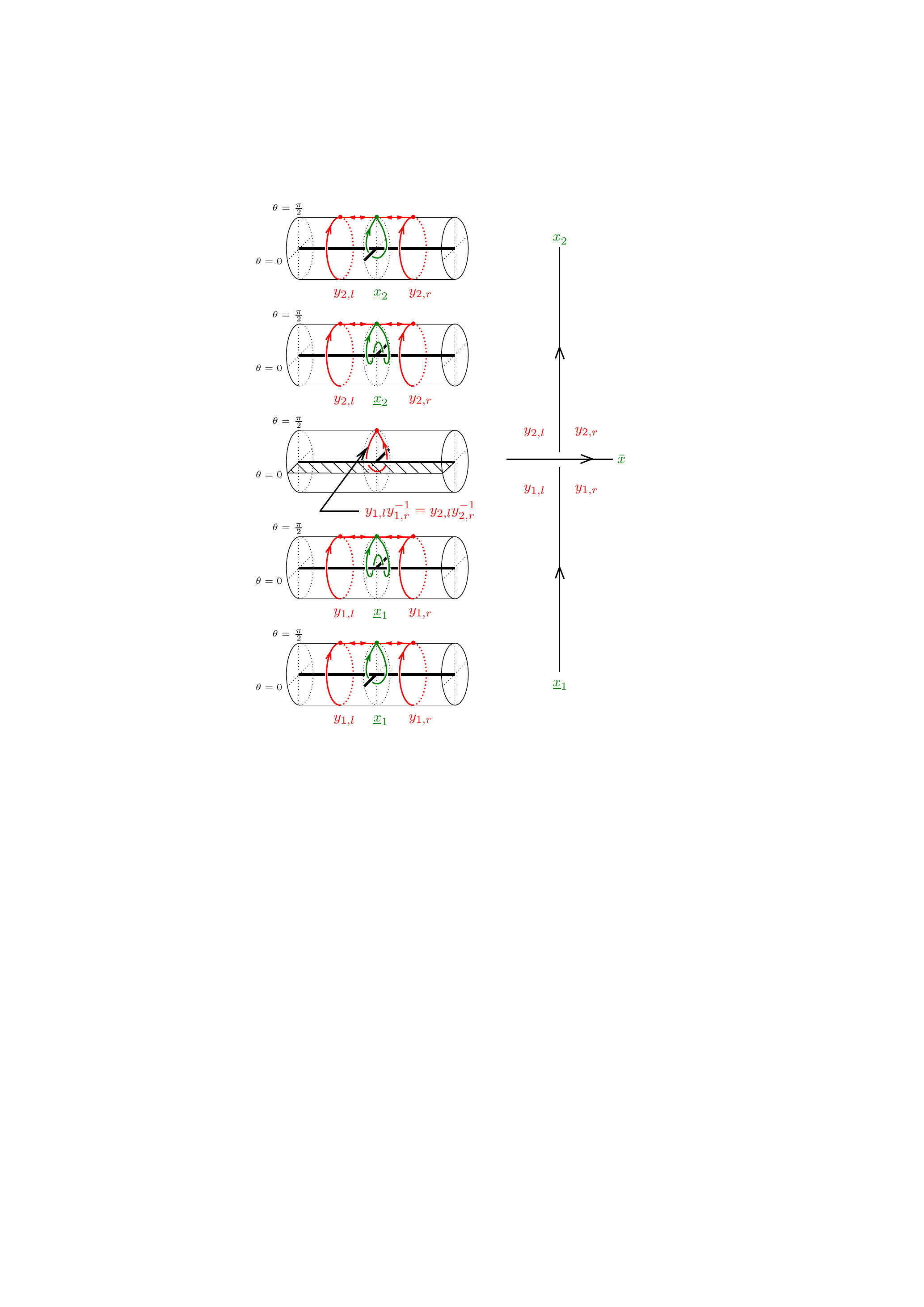}
\caption{Meridians around a vertex.}\label{fig26}
\end{center}
\end{figure}

Now we are ready to give a presentation for $\pi_1 (B'  \setminus X)$. 
Suppose that  $V_k  \cap E_{k'} = \partial V_k  \cap \partial E_{k'}   \neq \emptyset$. 
Note that by the assumption that $T=C_X\setminus A$ is a tree, $\partial V_k\cap \partial E_{k'}$ is connected. 
Then, by the construction the pair $((V_k   \cap E_{k'}) \times D', ((V_k \cap E_{k'}) \times D') \cap X)$ 
is homeomorphic to the cone on $(S^2, P)$, where $P \subset S^2$ is as above. 
Thus, the fundamental group of  $((V_k \cap E_{k'}) \times D') \setminus X$ is 
a free group of rank $2$. 
Further, the maps $\pi_1 ( ( ( \partial V_k  \cap \partial E_{k'}) \times D' ) \setminus X) \to 
\pi_1 ((V_k \times D') \setminus X)$ and 
$\pi_1 ( (\partial V_k  \cap \partial E_{k'})\times D') \setminus X ) \to \pi_1 ((E_{k'} \times D') \setminus X)$ 
induced from the inclusion maps are monomorphisms. 
Therefore, by applying van Kampen's theorem finitely many times with 
checking the images of the above monomorphisms, we have 
\begin{equation}\label{eq101}
\langle x_1, \ldots, x_n, y_1, \ldots , y_m \mid  s_1 , \ldots, s_{n'} \rangle
\end{equation}
as a presentation of $\pi_1 (B' \setminus X)$.

Next we set $B''=\Nbd(C_X;D)\times D'$ and observe $\pi_1(B''\setminus X)$.
The manifold $B''\setminus X$ is homeomorphic to the union of $B'\setminus X$ and 
$((\Nbd(C_X;D)\cap \Nbd(A;D))\times D' )\setminus X$. 
For each cutting point $c_j$, there is a unique minimal closed curve on 
$(T\cup\{c_j\})\times \{(0,1)\}$ based at $b$, possibly with self-intersection, that passes through the point $(c_j, (0,1))$ exactly once.
We orient this loop so that the simple closed curve on this loop is oriented counterclockwise on 
$\Nbd(C_X;D)\times\{(0,1)\}$. 
Here the orientation on $\Nbd(C_X;D)\times\{(0,1)\}$ is chosen 
so that it coincides with that on 
$\Nbd(C_X;D)$ via the projection $\Nbd(C_X;D)\times\{(0,1)\}\to \Nbd(C_X;D)$. 
We denote this oriented loop by $\gamma_j$.
Then we see that $\pi_1(B''\setminus X)$ is obtained from the presentation of $\pi_1(B'\setminus X)$ in~\eqref{eq101} by adding, for each cutting point $c_j$, the generator $\gamma_j$ and the relations
\begin{equation}\label{eq104}
x_{b_j}=\gamma_jx_{f_j}\gamma_j^{-1}, 
\quad
y_{b_{j,l}}=\gamma_jy_{f_{j,l}}\gamma_j^{-1}
\quad\text{and}
\quad
y_{b_{j,r}}=\gamma_jy_{f_{j,r}}\gamma_j^{-1}, 
\end{equation}
where $x_{f_j}$ is the meridian of the forward strand and $x_{b_j}$ is that of the backward strand at $c_j$, 
$y_{f_{j,l}}$ and $y_{b_{j,l}}$ are the meridians of the regions on the left of $x_{f_j}$ and  $x_{b_j}$, respectively, 
and $y_{f_{j,r}}$ and $y_{b_{j,r}}$ are those on the right.
Remark that we can obtain one of them from the other two by using the relations
$y_{f_{j,r}}x_{f_j}y_{f_{j,l}}^{-1}=1$ and $y_{b_{j,r}}x_{b_j}y_{b_{j,l}}^{-1}=1$.

Finally, we consider $\pi_1(B^4\setminus X)$. 
For each region 
$R_j$ of $X$ on $D$ not containing a terminal point of $A$, 
let $R_j'$ 
be the region of $X\setminus A$ contained in 
$R_j$ 
and adjacent to the forward strand at the cutting point 
$c_j$ of $R_j$ 
see Figure~\ref{fig27}. 
If a region 
$R_j$ 
of $X$ on $D$ contains a terminal point of $A$, then we set 
$R'_j=R_j$.
Note that 
$y_{\varphi(j_1)},\ldots, y_{\varphi(j_k)}$ in the definition of $\gamma_j$ in the assertion are the meridians of $R'_{j_1},\ldots, R'_{j_k}$, respectively. 

\begin{figure}[htbp]
\begin{center}
\includegraphics[width=14cm, bb=129 591 488 711]{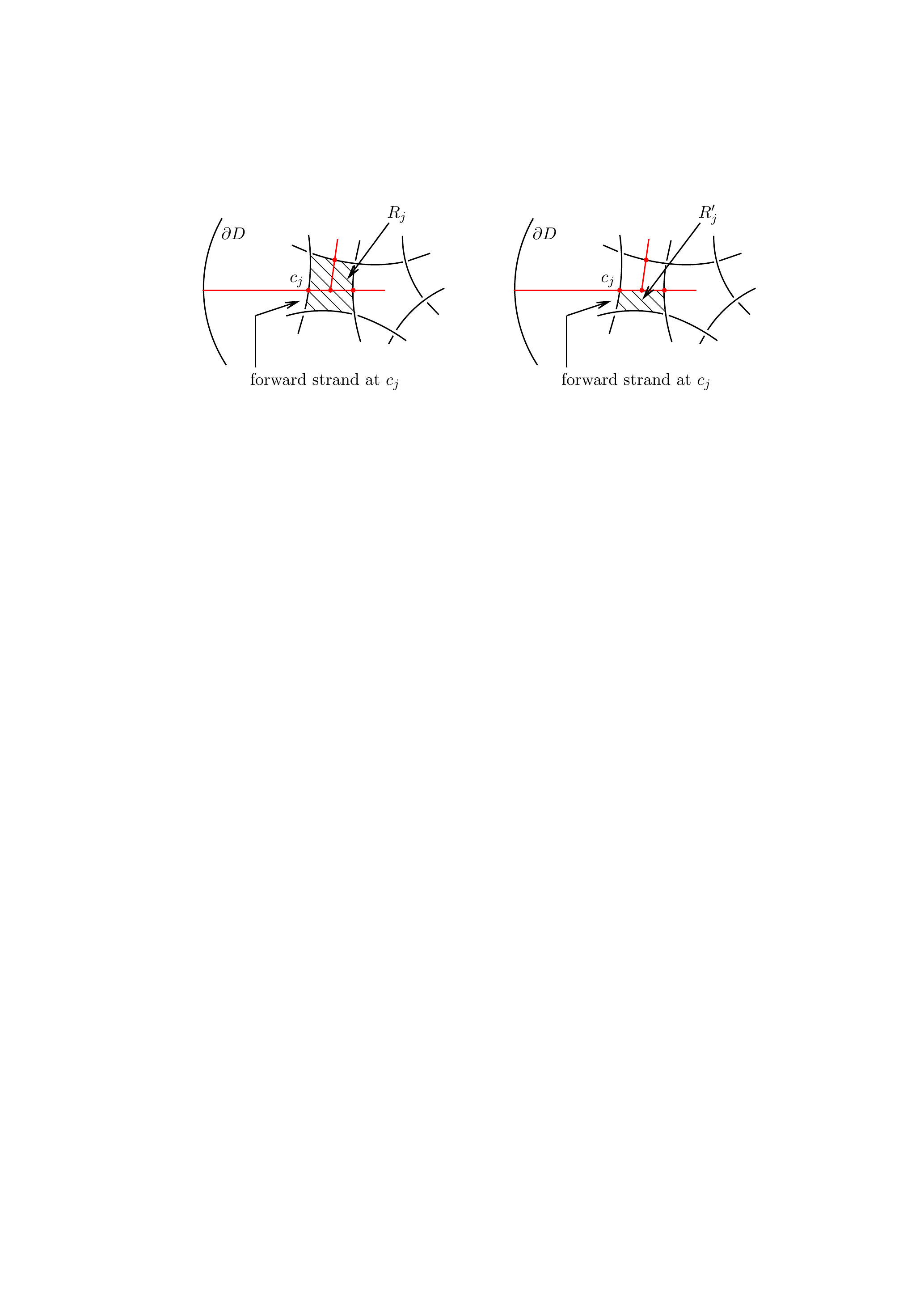}
\caption{The regions $R_j$ and $R_j'$.}\label{fig27}
\end{center}
\end{figure}

The manifold $B''\setminus X$ is obtained from $B^4\setminus X$ by removing  
$\mathcal R_i\times (D'\setminus \{(0,0)\})$ for each region $R_i$ of $X$ on $D$. 
If $R_i$ is the region containing $\partial D$ then the removal of $\mathcal R_i\times (D'\setminus \{(0,0)\})$ 
is realized by a deformation retract. Therefore, it does not change the fundamental group.
Instead of removing $\mathcal R_i\times (D'\setminus \{(0,0)\})$ 
for each internal region $R_i$ of $X$, we remove $(\mathcal R_i\cap R_i') \times (D'\setminus \{(0,0)\})$. 
Set $S=D\setminus\bigcup_{i=1}^{n_0}(\mathcal{R}_i \cap R'_i)$, which is a regular neighborhood of $C_X$ in $D$. 
The $4$-ball $B^4$ of $(X,\gl)$ is obtained from $S\times D'$ by attaching $R_i'\times D'$ for $i=1,\ldots,n_0$ using the gluing maps determined by $\gl$.
For each cutting point $c_i$, let $\delta_i$ be the loop obtained by concatenating the minimal path $\omega_i$ on $T\times \{(0,1)\}$ from the base point $b$ to $(q_i, (0,1))$, 
where $q_i$ is a point on the forward strand at $c_i$, 
a straight path $\omega'_i$ from $(q_i, (0,1))$ to a point $(p_i, (0,1))$ on $\partial (\mathcal R_i\cap R_i') \times \{(0,1)\}$, the circle path on  $\partial (\mathcal R_i\cap R_i')\times\{(0,1)\}$ parametrized counterclockwise, the inverse path of $\omega_i'$ and then the inverse path of $\omega_i$. 
Remark that we have the relation 
\begin{equation}\label{eq102}
   \gamma_j=\delta_{j_k}\delta_{j_{k-1}}\cdots\delta_{j_2}\delta_{j_1},
\end{equation}
where the
suffices $j_1, j_2, \ldots, j_k$ are those of the regions $R_{j_1}, R_{j_2}, \ldots, R_{j_k}$ of 
$X$ on $D$ intersecting the subtree $A_j'$ aligned in  counterclockwise ordering.

The reference framing of each internal region $R_i$ is given as follows. 
Fix a strong deformation retract $\{\phi_t:S\to C_X\mid t\in[0,1]\}$. 
For $u\in C_X$, let $\theta(u)$ be the argument of 
the segment $(X \setminus D)\cap (\{u\}\times D')$ on $\{u\}\times \Real^2$. 
This is well-defined modulo $\pi$
at the vertices of $X$ since the arguments of the regions corresponding 
to the overstands and understrands are $0$ and $\pi$, respectively.
Set
\[
H_S=
\left\{
(u,(r, \theta)) \in S \times D' 
\mid
0 \leq r \leq 1, 
\theta = \theta(\phi_1(u)) \text{ or } \theta(\phi_1(u)) + \pi 
\right\},
\]
where $(r,\theta)$ are the polar coordinates on $D'$.
The reference framing of $R_i$ is given as the annulus or M\"obius band 
$\Nbd(\partial(\mathcal{R}_i \cap R'_i);\partial H_S)$.

To observe the influence of the gleam on the presentation of $\pi_1(B^4 \setminus X)$, 
we use the $3$-manifold $\partial S\times D'$ 
in accordance with the definition of the gleam. 
Note that, in this $3$-manifold $\partial S\times D'$, 
the oriented loop $\delta_i$ is negatively transverse to 
$\{u\}\times D'$ for $u\in \partial (\mathcal{R}_i \cap R'_i)$ 
since the orientation of $\partial S\times D'$ is induced from $S\times D'$ 
but not from $(\mathcal{R}_i \cap R'_i)\times D'$.
For example, the arc on $\partial (\mathcal R_i\cap R_i')$ for the region $R_i$ shown on the right in Figure~\ref{fig_added} is oriented from $u_1$ to $u_3$, which is opposite to the orientation of $\delta_i$. 
This orientation and the orientation of the disk $D'$ with coordinates $(r,\theta)$ give the orientation of the $3$-manifold $\partial S\times D'$. The reference framing $\Nbd(\partial (\mathcal R_i\cap R_i'); \partial H_S)$ is given by the band shown on the left in the figure. This band is not twisted from $u_1$ to $u_2$ since it corresponds to the overstrand, and it is twisted by $+\pi$ from $u_2$ to $u_3$ since it corresponds to the understrands and the part of $X$ rotates as explained in~(iii) in Section~\ref{sec24}. Since the local contribution of this corner to $c(R_i)$ is $-\frac{1}{2}$, we can conclude that the rotation of the reference framing is $-1$ times the local contribution. This observation is also true for the other three regions at the vertex.
Thus, for each region $R_i$, the loop $\partial(\mathcal{R}_i \cap R'_i)\times \{(0,1)\}$ 
rotates $-c(R_i)\times 2\pi$
with respect to the reference framing in $\partial S\times D'$.

\begin{figure}[htbp]
\begin{center}
\includegraphics[width=11.5cm, bb=184 642 400 709]{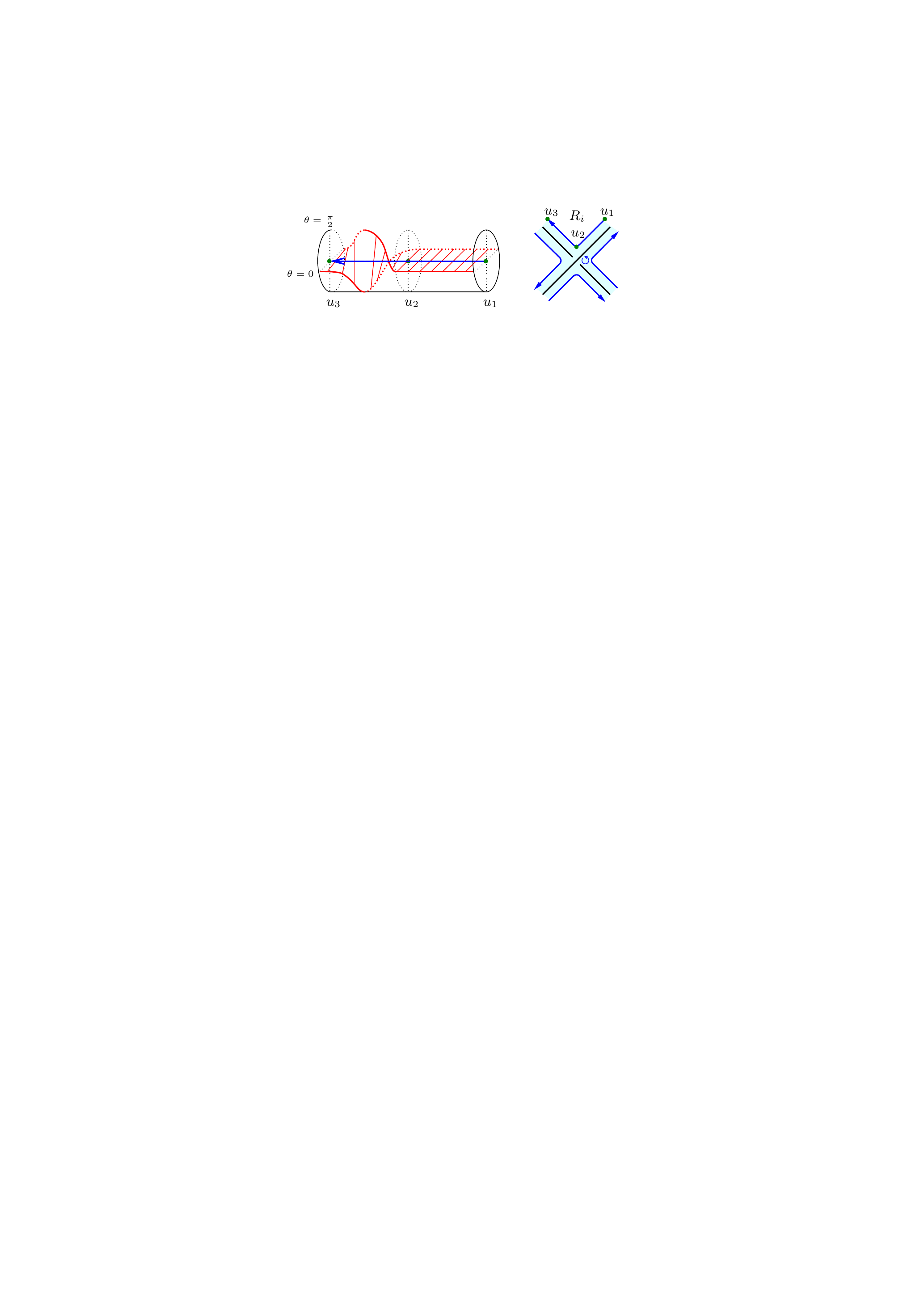}
\caption{The reference framing.}\label{fig_added}
\end{center}
\end{figure}

By the definition of the gleam, the $4$-dimensional block 
$(\mathcal{R}_i \cap R'_i)\times D'$ is glued to $S\times D'$ so that 
$\partial (\mathcal{R}_i \cap R'_i)\times \{(u'_1,0)\in D'\mid -1\leq u'_1\leq 1\}$ 
rotates 
$\gl(R_i)\times 2\pi$ 
with respect to the reference framing in 
$\partial S\times D'$. 
Hence the loop $y_i^{\gl(R_i)-c(R_i)}\delta_i^{-1}$ is nullhomotopic in $B^4\setminus X$ by van Kampen's theorem. 
Thus we have the relation 
\begin{equation}\label{eq1000}
\delta_{j_s}=y_{j_s}^{\gl(R_{j_s})-c(R_{j_s})}
\end{equation}
for $s=1,\ldots,k$. 
Substituting these relations into~\eqref{eq102}, we have the equality in~\eqref{eq103} in the assertion. 

To complete the proof, we need to show that the second and third relations in~\eqref{eq104} are not necessary.
For each cutting point $c_j$, let $y_{f_{j,1}}$ and $y_{b_{j,1}}$ be the meridians of the regions of $X\setminus A$ contained in $R_j$ and adjacent to the forward and backward strands at $c_j$, respectively, and
let $y_{f_{j,2}}$ and $y_{b_{j,2}}$ be the meridians of the regions of $X\setminus A$ not contained in $R_j$ and adjacent to the forward and backward strands at $c_j$, respectively.
With these notations, the relations in~\eqref{eq104} can be written as
\begin{equation}\label{eq106}
x_{b_j}=\gamma_jx_{f_j}\gamma_j^{-1}, 
\quad
y_{b_{j,1}}=\gamma_jy_{f_{j,1}}\gamma_j^{-1}
\quad\text{and}
\quad
y_{b_{j,2}}=\gamma_j y_{f_{j,2}}\gamma_j^{-1}.
\end{equation}

Now, we are going to show that for any cutting point $c_j$, 
the words $\gamma_j y_{f_{j,1}} \gamma_j^{-1} y_{b_{j,1}}^{-1}$ and $\gamma_j y_{f_{j,2}} \gamma_j^{-1}y_{b_{j,2}}^{-1}$ are consequences of the relators 
$s_1, \ldots, s_{n'}$ and $t_1, \ldots, t_{n''}$. 
It is easily checked that the third relation in~\eqref{eq106}
follows from the first and second relations. 
Thus, it suffices to show that 
$\gamma_j y_{f_{j,1}} \gamma_j^{-1}y_{b_{j,1}}^{-1}$ 
is a consequence of the relators 
$s_1, \ldots, s_{n'}$ and $t_1, \ldots, t_{n''}$. 
For this purpose, we define the \textit{size} of $c_j$ to be the number of cutting points contained in $A'_j$ except $c_j$. 
The proof is by induction on the size of $c_j$.

Suppose that the size of $c_j$ is zero, that is, $R_{j_s}$ contains a terminal point of $A'_j$. 
Then, we have $\gamma_{j_s} = \delta_{j_s}$ 
by~\eqref{eq102}.
We also have $y_{f_{j_s, 1}} = y_{b_{j_s, 1}}$ for they correspond to the same component $R_{j_s}$ of the regions 
of $X \setminus A$ on $D$. 
Thus, the word $\gamma_jy_{f_{j,1}}\gamma_j^{-1}y_{b_{j,1}}^{-1}$, which corresponds to the second relation 
in~\eqref{eq106}, is a consequence of the relators $s_1, \ldots, s_{n'}$ and $t_1, \ldots, t_{n''}$ by~\eqref{eq1000}.

For the inductive step, let $h > 0$ be an integer, and assume that the assertion is true for all cutting points of size less than $h$. 
Let $c_{j_s}$ be a cutting point of size $h$. 
On the boundary of the region $R_{j_s}$, there are at most $h$ cutting points $c_{k_1}, c_{k_2}, \ldots, c_{k_{h'}}$ except 
$c_{j_s}$, where we order them counterclockwise from $c_{j_s}$. 
Note that by definition we have $y_{f_{j_s, 1}} = y_{f_{k_1, 2}}$,  $y_{b_{k_t, 2}} = y_{f_{k_{t+1}, 2}}$ ($t = 1, \ldots, h'-1$), 
$y_{b_{k_{h'}, 2}} = y_{b_{j_s, 1}} $. 
See Figure~\ref{fig28}.
By the assumption of induction, 
the word $\gamma_{k_t} y_{f_{k_t, 1}} \gamma_{k_t}^{-1}y_{b_{k_t, 1}}^{-1}$ 
is a consequence of the relators $s_1, \ldots, s_{n'}$ and $t_1, \ldots, t_{n''}$ for 
$t = 1, \ldots, h'-1$. 
As we have explained above, the same thing holds for 
$\gamma_{k_t} y_{f_{k_t, 2}} \gamma_{k_t}^{-1}y_{b_{k_t, 2}}^{-1}$. 
Further, the counterclockwise ordering of $c_{k_1}, c_{k_2}, \ldots, c_{k_{h'}}$ implies 
 $\gamma_{j_s} = \gamma_{k_{h'}} \cdots \gamma_{k_2} \gamma_{k_1} \delta_{j_s}$. 
From these, we have 
\[ y_{b_{j_s, 1}} = y_{b_{k_{h'}, 2}} = \gamma_{k_{h'}} y_{f_{k_{h'}, 2}} \gamma_{k_{h'}}^{-1} 
= \gamma_{k_{h'}} \cdots \gamma_{k_1} y_{f_{k_1, 2}} \gamma_{k_1}^{-1}  \cdots \gamma_{k_{h'}}^{-1} 
= \gamma_{j_s} \delta_{j_s}^{-1}  y_{f_{j_s, 1}} \delta_{j_s} \gamma_{j_s}^{-1}.
\]  
 Since $ y_{f_{j_s}, 1}$ commutes with $\delta_{j_s}$, we have 
$y_{b_{j_s, 1}} = \gamma_{j_s} y_{f_{j_s, 1}} \gamma_{j_s}^{-1}$. 
This implies that 
the word $\gamma_{j_s} y_{f_{j_s, 1}} \gamma_{j_s}^{-1} y_{b_{j_s, 1}}^{-1}$ 
is a consequence of the relators $s_1, \ldots, s_{n'}$ and $t_1, \ldots, t_{n''}$. 
This completes the proof.
\end{proof}

\begin{figure}[htbp]
\begin{center}
\includegraphics[width=11cm, bb=138 546 406 712]{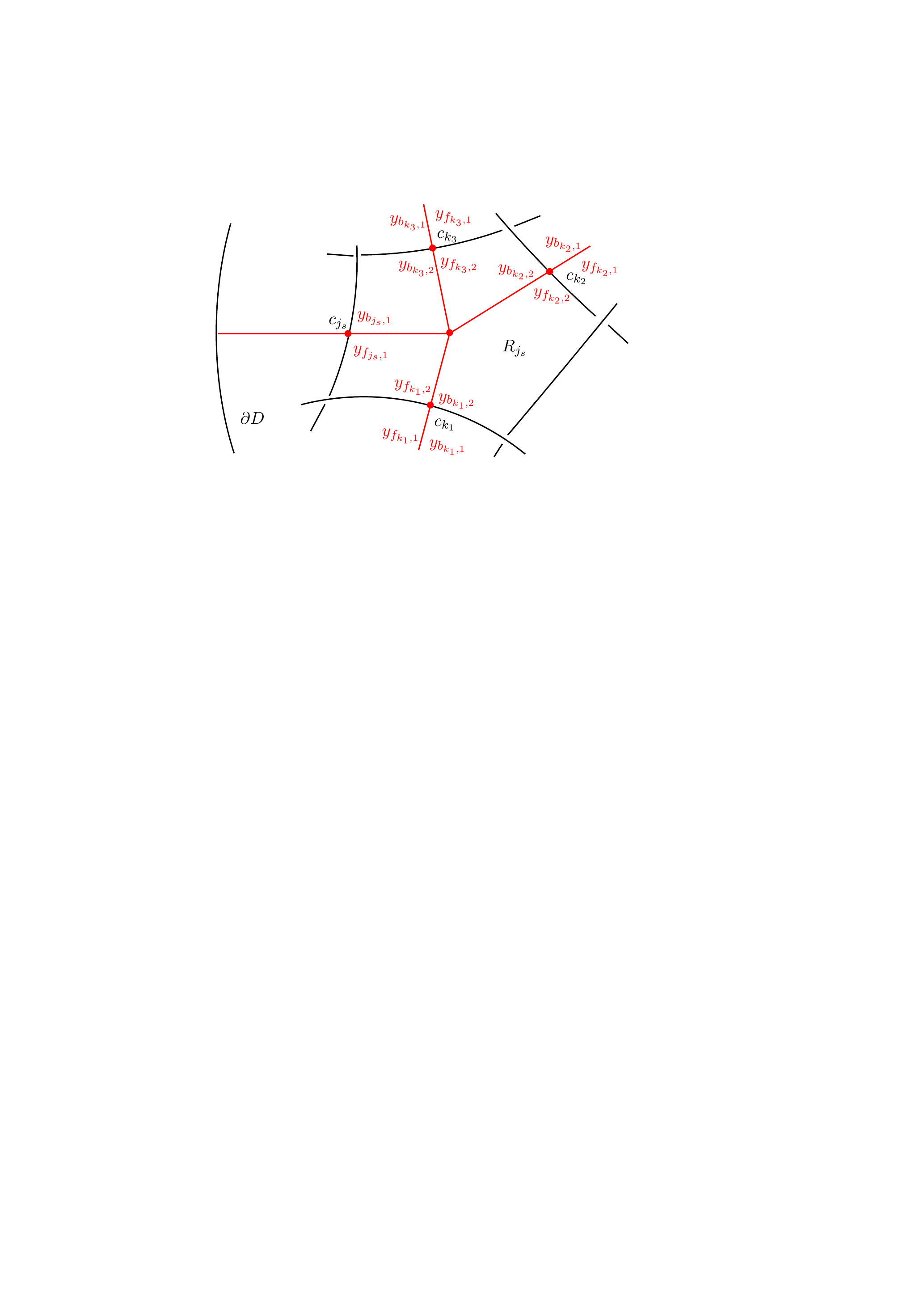}
\caption{Meridians $y_{f_{k_t,i}}$, $y_{b_{k_t,i}}$ for $i=1,2$ and $t=1,\ldots, h$.}\label{fig28}
\end{center}
\end{figure}

\begin{remark}
The condition $X\setminus D\subset Y$ for a subpolyhedron $Y$ is not essential.
If one wants to consider the case  $X\setminus D\not\subset Y$, one should use $Y\cup D$ instead of $X$.
\end{remark}

\begin{remark}
If we reverse the orientations of the meridians, the 
relator $y_{r_i}x_iy_{l_i}^{-1}$ changes to $y_{l_i}^{-1}x_iy_{r_i}$ 
since we obtain the relation $y_{r_i}^{-1}x_i^{-1}(y_{l_i}^{-1})^{-1}=1$ by the reversal. 
Thus, the reversal of the orientation of the meridians corresponds to the reversal of the order of the words of the relators. By the reversal, $\gamma_j$ should be replaced by 
\[
\gamma_j=
y_{\varphi(j_k)}^{c(R_{j_k})-\gl(R_{j_k})}
\cdots y_{\varphi(j_2)}^{c(R_{j_2})-\gl(R_{j_2})}y_{\varphi(j_1)}^{c(R_{j_1})-\gl(R_{j_1})},
\]
where the signs of the powers change since the orientation of $D'$ is reversed,  but the order of $j_1, j_2, \ldots, j_k$ 
does not change since the paths from $b$ to the regions do not change by the reversal.
\end{remark}

\subsection{Case with $Y\cap \partial D=\emptyset$}

In this section, we assume that a subpolyhedron $Y$ of a shadowed polyhedron $(X,\gl)$ with an immersed curve presentation satisfies $X\setminus D\subset Y$ and $Y\cap \partial D=\emptyset$. 
Let $C_X$ be the immersed curve presentation of $X$ on $D$.
As mentioned in the introduction, the presentation in Theorem~\ref{thm1} can be used for presenting the fundamental groups of the complements of Milnor fibers and those of complexified real line arrangements. 
These objects can be given by a shadowed polyhedron $(X,\gl)$ with a subpolyhedron $Y$ satisfying the above assumptions. 
When we calculate these fundamental groups, we can simplify the calculation slightly. 
We first introduce a reduced version of a system of cutting trees.

\begin{definition}
A disjoint union of trees on $D$ obtained from a system $A$ of cutting trees of $C_X$
by removing all cutting trees intersecting $C_X$ only once is called a {\it reduced system of cutting trees} of $C_X$. 
A disjoint union of trees on $D$ is said to be a {\it reduced system of cutting trees} of $D_X$ if it is a reduced system of cutting trees of $C_X$.
\end{definition}

A system of cutting trees of the immersed curve presentation in Figure~\ref{fig1} is given in Figure~\ref{fig2}.
The dotted arcs are the trees  intersecting $C_X$ only once, and the union of the solid trees is a reduced system of cutting trees.

\begin{figure}[htbp]
\begin{center}
$ $ \phantom{aaaaaaaaa}\includegraphics[width=7cm, bb=148 531 419 712]{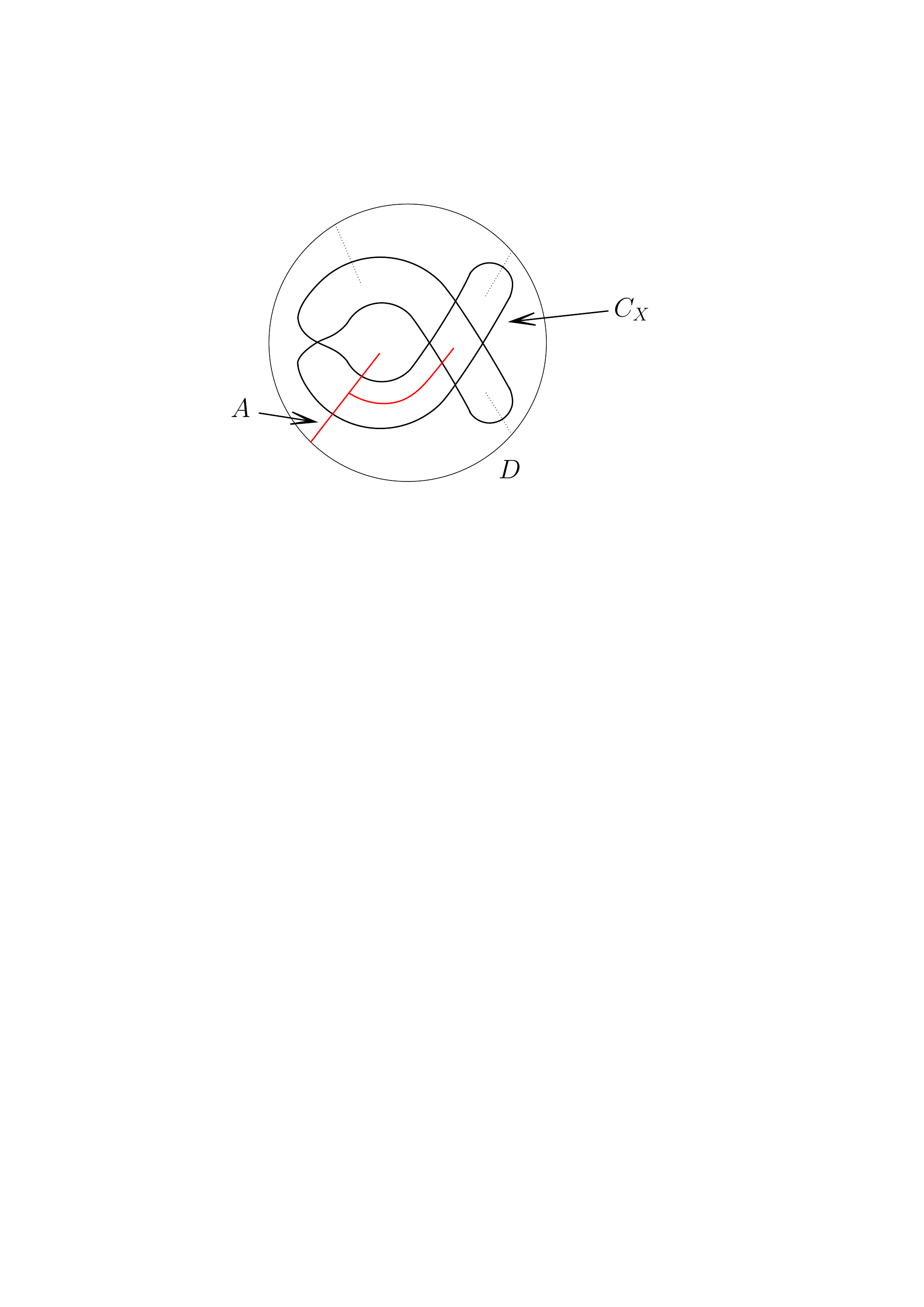}
\caption{A reduced system of cutting trees.}\label{fig2}
\end{center}
\end{figure}

Let $A^{\mathrm{red}}$ be the reduced system obtained from a system $A$ of cutting trees of $C_X$ and
$c_j$ be a cutting point of $A$ contained in $A\setminus A^{\mathrm{red}}$.
This is a cutting point of the region $R_j$ containing the corresponding terminal point of $A$. Let $x_{f_j}$ be the meridian of the forward strand at $c_j$ and $x_{b_j}$ be that of the backward strand at $c_j$. Let $y_{f_j,l}$ and $y_{f_j,r}$ be the meridians of the regions on the left and right of the forward regions, respectively, and $y_{b_j,l}$ and $y_{b_j,r}$ be those of the backward regions.

\begin{lemma}\label{lemma35_added}
Suppose that $Y\cap \partial D=\emptyset$. 
Let $c_j$ be a cutting point of $A$ contained in $A\setminus A^{\mathrm{red}}$.
Then the identities $y_{f_j,l}=y_{b_j,l}$, $y_{f_j,r}=y_{b_j,r}$ and $x_{f_j}=x_{b_j}$ hold.
\end{lemma}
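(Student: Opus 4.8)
The plan is to analyze the local picture at a cutting point $c_j$ that belongs to a cutting tree intersecting $C_X$ exactly once. By the definition of a reduced system of cutting trees, such a point $c_j$ is precisely one whose subtree $A_j'$ contains the terminal point of the tree, and the region $R_j$ adjacent to $c_j$ and intersecting $A_j'$ is the \emph{only} region meeting $A_j'$. In other words, $A_j'$ is a single edge running from $c_j$ out to the terminal point sitting in the interior of $R_j$, and it crosses no other part of $C_X$. The whole argument will rest on this topological simplicity.

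First I would observe that since $A_j'$ crosses $C_X$ only at $c_j$ and its terminal point lies in the interior of the single region $R_j$, the forward strand and the backward strand at $c_j$ are two arcs of the \emph{same} strand of $D_X$, cut apart only by the transverse intersection with $A_j'$ at $c_j$; they carry the meridians $x_{f_j}$ and $x_{b_j}$. Likewise, the region on the left of the forward strand and the region on the left of the backward strand are literally the same region of $X$ (they are joined around the terminal point, since $A_j'$ does not separate them beyond the single cut at $c_j$), and similarly on the right. Consequently the meridians $y_{f_j,l}$ and $y_{b_j,l}$ are meridians of one and the same region, and the same holds for $y_{f_j,r}$ and $y_{b_j,r}$.

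Next I would make this identification of homotopy classes precise using the structure already set up in the proof of Theorem~\ref{thm1}. In the notation of that proof, the conjugating element for $c_j$ is $\gamma_j = \delta_j$, because $A_j'$ contains no cutting points other than $c_j$ (its size is zero), and by the relation~\eqref{eq1000} we have $\delta_j = y_{j}^{\gl(R_j) - c(R_j)}$, a power of the meridian $y_{b_j,l} = y_{f_j,l}$ of the single region $R_j$. Since a meridian commutes with its own powers, conjugation by $\gamma_j$ acts trivially on $y_{f_j,l}$, which yields $y_{f_j,l} = \gamma_j y_{f_j,l}\gamma_j^{-1} = y_{b_j,l}$ from the second relation of~\eqref{eq104}; the relation $y_{f_j,r} = y_{b_j,r}$ follows identically. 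Finally, feeding these two identities into the first relation of~\eqref{eq104}, namely $x_{b_j} = \gamma_j x_{f_j}\gamma_j^{-1}$, together with the edge relations $y_{f_j,r}x_{f_j}y_{f_j,l}^{-1} = 1$ and $y_{b_j,r}x_{b_j}y_{b_j,l}^{-1} = 1$ of the two strands, forces $x_{f_j} = x_{b_j}$.

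The step I expect to be the main obstacle is the careful verification that the left (respectively right) regions at the forward and backward strands really coincide as regions of $X$, rather than merely being connected by a path. This hinges on the fact that $A_j'$ is a single arc terminating inside $R_j$ and meeting $C_X$ only at $c_j$, so that cutting along it does not locally separate the adjacent regions; I would confirm this by examining the neighborhood of the terminal point, where the two sides of the cut reconnect. The hypothesis $Y \cap \partial D = \emptyset$ guarantees that none of these meridians is set to the identity, so all three identities are genuine statements in $\pi_1(B^4 \setminus Y)$ and are not vacuously satisfied.
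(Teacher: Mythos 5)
There is a genuine gap on one of the two sides of $c_j$. Your key geometric claim --- that the region to the left (resp.\ right) of the forward strand and the region to the left (resp.\ right) of the backward strand are ``literally the same region,'' being ``joined around the terminal point'' --- is correct only on the side of the edge $e_j$ that contains the terminal point of $A_j$. There the arc $A_j'$ has a free end in the interior of $R_j$, so $R_j\setminus A_j'$ is connected and the two meridians agree. On the \emph{other} side of $e_j$, however, sits the region containing $\partial D$: the tree $A_j$ runs from a point of $\partial D$ to $c_j$ through that region, so the relevant arc $A_j\setminus A_j'$ has \emph{both} endpoints on the boundary of that region and does separate it near $c_j$ (and, since every other cutting tree also sends an arc into this region, the components of $X\setminus A$ adjacent to the forward and backward strands on this side are in general genuinely distinct). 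Your fallback algebraic argument does not repair this: $\gamma_j=\delta_j$ is a power of the meridian of the component of $R_j\setminus A$, not of the meridian on the outer side, so ``a meridian commutes with its own powers'' gives you nothing for the outer-side meridians; the relations in~\eqref{eq104} only say those two meridians are conjugate by $\gamma_j$, not equal.

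The missing ingredient is precisely the hypothesis $Y\cap\partial D=\emptyset$, whose role you have inverted: it guarantees that the region containing $\partial D$ is \emph{not} contained in $Y$, so by the convention of Theorem~\ref{thm1} both of its meridians at $c_j$ \emph{are} set to the identity (you assert the opposite, that the hypothesis guarantees ``none of these meridians is set to the identity''). This is exactly how the paper argues: on the outer side the two meridians are both $1$; on the terminal-point side they coincide because $A_j'$ does not separate $R_j$; and then the two edge relations $y_{f_j,r}x_{f_j}y_{f_j,l}^{-1}=1$ and $y_{b_j,r}x_{b_j}y_{b_j,l}^{-1}=1$ force $x_{f_j}=x_{b_j}$, each being the meridian (or its inverse) of the component of $R_j\setminus A$. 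Your treatment of the terminal-point side and your final deduction of $x_{f_j}=x_{b_j}$ from the edge relations are fine; it is the outer side that requires the hypothesis, used in the correct direction.
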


\begin{proof}
Suppose that the region adjacent to $\partial D$ is on the left of the edge $e_j$ of $C_X$ on which $c_j$ lies. Then, since the meridian of that region is the identity, we have $y_{f_j,l}=y_{b_j,l}=1$. Furthermore, since the terminal point of the cutting tree lies in the region $R_j$ on the right of $e_j$, we have $y_{f_j,r}=y_{b_j,r}$. Hence, from the relations $y_{f_j,r}x_{f_j}y_{f_j,l}^{-1}=1$ and ${y_{b_j,r}x_{b_j}y_{b_j,l}}^{-1}=1$, we have $y_{f_j,r}=x_{f_j}^{-1}=x_{b_j}^{-1}$.

If the region adjacent to $\partial D$ is on the right of $e_j$, we have $y_{f_j,r}=y_{b_j,r}=1$ and  $y_{f_j,l}=y_{b_j,l}$ by the same reason. Hence, from the relations $y_{f_j,r}x_{f_j}y_{f_j,l}^{-1}=1$ and $y_{b_j,r}x_{b_j}{y_{b_j,l}}^{-1}=1$, we have $y_{f_j,l}=x_{f_j}=x_{b_j}$.

Thus, in either case, we obtain the identities in the assertion.
\end{proof}

Now, for an oriented link diagram presentation $D_X$,
the meridians of the strands of $D_X\setminus A^{\mathrm{red}}$ and the regions of $X\setminus A^{\mathrm{red}}$ on $D$ are defined in the same manner as those of $D_X\setminus A$ and $X\setminus A$ on $D$, respectively. These meridians are well-defined due to Lemma~\ref{lemma35_added}.

\begin{theorem}\label{thm2}
Let $(X,\gl)$ be a shadowed polyhedron with 
an oriented link diagram presentation $D_X$.
Let $Y$ be a simple subpolyhedron of $X$ satisfying $X\setminus D\subset Y$ and $Y\cap \partial D=\emptyset$. 
Then, for a reduced system $A^{\mathrm{red}}$ of cutting trees of $D_X$, we have 
\[ 
\pi_1(B^4\setminus Y)\cong
\langle
x_1, \ldots, x_n, y_1,\ldots,y_m \mid
s_1,\ldots, s_{n'},
t_1,\ldots, t_{n''}
\rangle,
\]
where $x_1,\ldots, x_n$ are the meridians of the strands of $D_X\setminus A^{\mathrm{red}}$, $y_1,\ldots, y_m$ are the meridians of the regions of $Y\setminus A^{\mathrm{red}}$ on $D$, 
$s_i=y_{r_i}x_iy_{l_i}^{-1}$ is the
relator obtained for
each edge $e_i$ of $C_X\setminus A^{\mathrm{red}}$, where $y_{l_i}=1$ (resp. $y_{r_i}=1$) if the region on the left (resp. right) of $e_i$ is not contained in $Y$,
and 
$t_j=\gamma_j x_{f_j} \gamma_j^{-1}x_{b_j}^{-1}$ is the relator obtained for each cutting point $c_j$ on $A^{\mathrm{red}}$,
where $x_{f_j}$ and $x_{b_j}$, $\gamma_j$ are the same as those in Theorem~\ref{thm1}. 
\end{theorem}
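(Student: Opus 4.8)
The plan is to derive the presentation of Theorem~\ref{thm2} from that of Theorem~\ref{thm1} by a sequence of Tietze transformations, using Lemma~\ref{lemma35_added} to remove the generators and relators carried by the cutting trees in $A\setminus A^{\mathrm{red}}$. These are precisely the cutting trees meeting $C_X$ once: each runs from $\partial D$ into a single internal region $R_j$ and crosses $C_X$ at one cutting point $c_j$, which lies on an edge $e_j$ of $C_X$ separating $R_j$ from the region containing $\partial D$. The hypothesis $Y\cap\partial D=\emptyset$ makes the meridian of that outer region trivial, and this is exactly the input that powers Lemma~\ref{lemma35_added}.

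First I would start from the presentation of Theorem~\ref{thm1} for the full system $A$. For each cutting point $c_j\in A\setminus A^{\mathrm{red}}$, Lemma~\ref{lemma35_added} supplies the identities $x_{f_j}=x_{b_j}$, $y_{f_j,l}=y_{b_j,l}$, and $y_{f_j,r}=y_{b_j,r}$. Geometrically, declining to cut at $c_j$ fuses the forward and backward strands into a single strand of $D_X\setminus A^{\mathrm{red}}$ and fuses the adjacent region pieces into regions of $Y\setminus A^{\mathrm{red}}$; the identities of Lemma~\ref{lemma35_added} are exactly what make these fusions consistent. Reading $x_{b_j}=x_{f_j}$ and the two region identities as Tietze relations, I would delete one generator from each fused pair, so that the surviving generators are the meridians of the strands of $D_X\setminus A^{\mathrm{red}}$ and of the regions of $Y\setminus A^{\mathrm{red}}$, as required.

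Next I would verify that the relator $t_j$ attached to each removed cutting point is now redundant. Since such a tree meets only the single region $R_j$, the word $\gamma_j$ of~\eqref{eq103} collapses to one power $y_{\varphi(j)}^{\gl(R_j)-c(R_j)}$, where $y_{\varphi(j)}$ is the meridian of the piece of $R_j$ adjacent to the forward strand. The edge relator at that strand, combined with the triviality of the outer meridian, forces $x_{f_j}$ to equal $y_{\varphi(j)}^{\pm 1}$ (this is precisely the computation already carried out inside the proof of Lemma~\ref{lemma35_added}). After the identification $x_{f_j}=x_{b_j}$, the relator $t_j=\gamma_j x_{f_j}\gamma_j^{-1}x_{b_j}^{-1}$ therefore becomes a commutator of two powers of $y_{\varphi(j)}$, hence is trivial as a consequence of the surviving relators. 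Thus every $t_j$ with $c_j\in A\setminus A^{\mathrm{red}}$ may be deleted, leaving only the relators $t_j$ indexed by cutting points on $A^{\mathrm{red}}$.

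Finally I would check that the edge relators reorganize correctly: the edges of $C_X\setminus A^{\mathrm{red}}$ are obtained from those of $C_X\setminus A$ by concatenating the forward and backward strands at each removed $c_j$, and under $x_{f_j}=x_{b_j}$ the pair of edge relators flanking $c_j$ collapses to the single edge relator $s_i=y_{r_i}x_iy_{l_i}^{-1}$ of the merged strand. After all of these Tietze moves the presentation coincides with the one asserted in Theorem~\ref{thm2}. I expect the main obstacle to be the bookkeeping in these last two steps---tracking which generators get identified and confirming that the collapse of $\gamma_j$ and of each flanking pair of edge relators is uniform over all removed cutting points---rather than any isolated hard computation, since Lemma~\ref{lemma35_added} already furnishes the essential algebraic input.
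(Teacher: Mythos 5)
Your proposal is correct and follows essentially the same route as the paper: apply Theorem~\ref{thm1} with the full system $A$, then use Lemma~\ref{lemma35_added} to identify the fused generators and to observe that for each cutting point in $A\setminus A^{\mathrm{red}}$ the word $\gamma_j$ collapses to a power of a meridian equal to $x_{f_j}^{\pm 1}=x_{b_j}^{\pm 1}$, so the relator $t_j$ is a trivial commutator and can be dropped. The paper states this more tersely (leaving the Tietze bookkeeping and the merging of flanking edge relators implicit), but the substance is identical.
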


\begin{proof}
Applying Theorem~\ref{thm1}, we obtain a presentation of the fundamental group $\pi_1(B^4\setminus Y)$
using a system $A$ of cutting trees. We replace $A$ by the reduced system $A^{\mathrm{red}}$ by removing all cutting trees of $A$ intersecting $C_X$ only once. 
Let $c_j$ be a cutting point of $A$ contained in $A\setminus A^{\mathrm{red}}$.
By Lemma~\ref{lemma35_added}, 
the identities $y_{f_j,l}=y_{b_j,l}$, $y_{f_j,r}=y_{b_j,r}$ and $x_{f_j}=x_{b_j}$ hold. 
The relation $\gamma_j x_{f_j}\gamma_j^{-1}x_{b_j}^{-1}=1$ for this cutting point $c_j$ 
can be obtained from $\gamma_j=y_{f_j,r}^{\gl(R_j)-c(R_j)}$  and $y_{f_j,r}=x_{f_j}^{-1}=x_{b_j}^{-1}$ if $y_{f_j,l}=1$, 
and
$\gamma_j=y_{f_j,l}^{\gl(R_j)-c(R_j)}$ and $y_{f_j,l}=x_{f_j}=x_{b_j}$ if $y_{f_j,r}=1$, 
which means that we can remove the relator $\gamma_j x_{f_j}\gamma_j^{-1}x_{b_j}^{-1}$
for this cutting point $c_j$ from the list of relators. 
This completes the proof.
\end{proof}

\section{Wirtinger presentation}

Let $(X, \mathfrak{gl})$ be a shadowed polyhedron with an oriented link diagram presentation $D_X$, and $Y$ be the simple subpolyhedron of $X$ obtained from $X$ by removing the region containing $\partial D$. 
Suppose that $\mathfrak{gl} (R_i) = c(R_i)$ for $i=1, \ldots, n_0 - 1$, where we recall that $n_0$ is the number 
of regions of $X$ on $D$, and 
$\mathfrak{gl} (~ \cdot ~)$ and $c( ~ \cdot ~)$ are defined only for the $n_0 - 1$ regions of $X$ on $D$ that do not contain $\partial D$. 
Let 
\[ \langle x_1, \ldots , x_n, y_1, \ldots , y_m  \mid s_1, \ldots, s_{n'}, t_1, \ldots, t_{n''} \rangle\]
be the presentation of $\pi_1 (B^4 \setminus Y)$ obtained by using a system $A$ of cutting trees as in Theorem 3.1. 
By~\cite{Tur94}, $D_X$ can be regarded as a diagram $D_L$ of the (oriented) link $L=\partial X\setminus \partial D$ in $\partial B^4=S^3$. 
Let 
\[ \langle a_1, \ldots , a_N \mid \mbox{$a_{j_c} a_{i_c} a_{k_c}^{-1} a_{i_c}^{-1}$ for each crossing $c$ of $D_L$}\rangle \]
be the Wirtinger presentation of $\pi_1 (S^3\setminus L)$ obtained by using $D_L$,
where $a_{i_c}$, $a_{j_c}$ and $a_{k_c}$
are the meridians of $L$ around each crossing $c$ of $D_L$ given as in Figure~\ref{fig10}.

\begin{figure}[htbp]
\begin{center}
\includegraphics[width=3cm, bb=231 622 318 708]{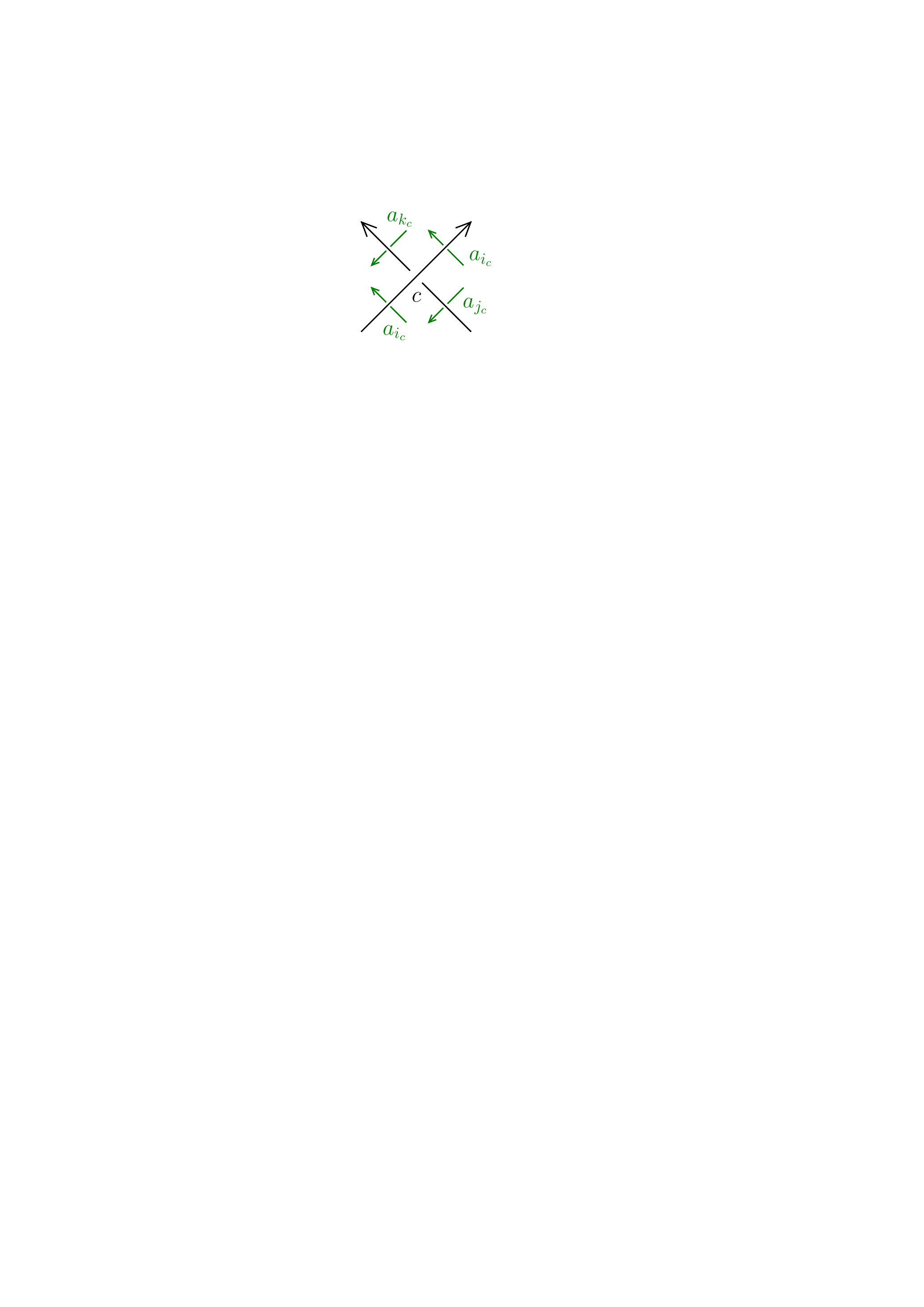}
\caption{Wirtinger presentation.}\label{fig10}
\end{center}
\end{figure}

We define a homomorphism $\Phi$ from the free group $\langle a_1, \ldots , a_N \rangle$ to 
the free group  $\langle x_1, \ldots , x_n, y_1, \ldots , y_m \rangle$ as follows: 
\begin{enumerate}
\item
For a generator $a_j$ corresponding to the strand of $D_L$ that does not intersect the system $A$ of cutting trees, 
define $\Phi (a_j)$ to be the generator of the latter free group corresponding to  the meridian of that strand of $D_X \setminus A$. 
\item
For a generator $a_j$ corresponding to the strand of $D_L$ that  intersects $A$ at the cutting point $c_j$, 
define $\Phi (a_j)$ to be the generator of the latter free group corresponding to  the meridian $x_{f_j}$ of the forward strand 
at $c_j$. 
\end{enumerate}

\begin{theorem}\label{thm3}
In the above setting, the map $\Phi$ induces an isomorphism 
$\Phi' : \pi_1 ( S^3 \setminus L ) \to \pi_1 (B^4 \setminus Y)$ between the quotient groups.
\end{theorem}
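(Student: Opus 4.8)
The plan is to show that $\Phi$ descends to a well-defined homomorphism on the quotient groups and that the descended map $\Phi'$ is a two-sided isomorphism. First I would verify well-definedness: each Wirtinger relator $a_{j_c}a_{i_c}a_{k_c}^{-1}a_{i_c}^{-1}$ attached to a crossing $c$ of $D_L$ must be sent by $\Phi$ to a consequence of the relators $s_1,\dots,s_{n'},t_1,\dots,t_{n''}$ of the target presentation. Here the hypothesis $\mathfrak{gl}(R_i)=c(R_i)$ for every region $R_i\neq R_{n_0}$ is decisive, because it forces each exponent $\mathfrak{gl}(R_{j_s})-c(R_{j_s})$ appearing in $\gamma_j$ of~\eqref{eq103} to vanish, so that every $\gamma_j=1$ in $\pi_1(B^4\setminus Y)$. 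Consequently each cutting-point relator $t_j=\gamma_j x_{f_j}\gamma_j^{-1}x_{b_j}^{-1}$ collapses to $x_{f_j}=x_{b_j}$, identifying the meridians of the forward and backward strands across every cutting point. This is exactly what is needed to make $\Phi$ coherent: a single strand of $D_L$ may be cut by $A$ into several strands of $D_X\setminus A$, but the collapsed relators identify all their meridians, so the definition of $\Phi(a_j)$ via the forward strand is independent of how the cut was made.

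With $\gamma_j=1$ in hand, I would translate a Wirtinger relator at a crossing $c$ into the target group. The overstrand meridian $\bar x$ is common to the two arcs it separates by the overstrand lemma, and the two understrand meridians $\underline x_1,\underline x_2$ are related, in the local presentation of $\pi_1((V_k\times D')\setminus X)$ established in the proof of Theorem~\ref{thm1}, by a conjugation that becomes precisely the Wirtinger conjugation once the gleam-induced exponents are killed. Concretely, the relations $y_{1,l}\bar x y_{2,l}^{-1}=1$ and $y_{i,r}\underline x_i y_{i,l}^{-1}=1$ combine, after setting the region meridians that arise as $\gamma$-factors equal to the identity, into $\underline x_2=\bar x^{\pm 1}\underline x_1 \bar x^{\mp 1}$, matching $a_{k_c}=a_{i_c}^{-1}a_{j_c}a_{i_c}$ up to the usual sign dictated by the crossing sign. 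Thus $\Phi$ maps each defining relator of the Wirtinger presentation into the normal closure of the target relators, and $\Phi'$ is a well-defined homomorphism.

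To produce an inverse I would build a homomorphism $\Psi$ in the reverse direction. The natural construction sends each $x_i$ (meridian of a strand of $D_X\setminus A$) to the Wirtinger generator $a_j$ of the unique strand of $D_L$ that contains it, and sends each region meridian $y_i$ to the identity. I would check that $\Psi$ respects the target relators: an edge relator $s_i=y_{r_i}x_iy_{l_i}^{-1}$ maps to $a_j$ (since the $y$'s go to $1$), which is trivial only after we observe that $s_i$ with both region meridians trivialized already forces $x_i$ to equal a single Wirtinger generator consistently; and each nontrivial cutting-point relator $t_j$ maps, via $\gamma_j\mapsto 1$, to $a_{f}a_{b}^{-1}$, which is trivial because the forward and backward strands at $c_j$ lie on the same strand of $D_L$ and hence carry the same Wirtinger generator. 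Finally I would verify $\Psi'\circ\Phi'=\mathrm{id}$ and $\Phi'\circ\Psi'=\mathrm{id}$ on generators, which is immediate from the definitions once the region meridians are understood to be killed on both sides.

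The main obstacle I expect is bookkeeping the relationship between a single Wirtinger strand of $D_L$ and the possibly many strands of $D_X\setminus A$ into which it is subdivided by the cutting trees, together with the conjugating words $\gamma_j$ that a priori intervene at each cut. The entire argument hinges on the vanishing $\gamma_j=1$, which is where the hypothesis $\mathfrak{gl}=c$ enters; once that is secured, the remaining steps are essentially the translation of local crossing data carried out in the proof of Theorem~\ref{thm1}, and the verification that $\Psi'$ is a genuine two-sided inverse rather than merely a left or right inverse. Handling the sign of the crossing uniformly, so that positive and negative crossings both yield the correct Wirtinger conjugation, is the one place where I would be careful to treat both cases explicitly rather than asserting symmetry.
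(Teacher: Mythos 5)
Your first half (well-definedness of $\Phi'$) is essentially the paper's argument: the hypothesis $\mathfrak{gl}(R_i)=c(R_i)$ kills every $\gamma_j$, so $[x_{f_j}]=[x_{b_j}]$ and $\Phi$ is coherent on subdivided strands, and the Wirtinger relator at a crossing is a product of conjugates of the four edge relators $s_i$ around that vertex. (One small imprecision: no region meridians need to be ``set equal to the identity'' in that computation --- the identity $[x_{j_c}x_{i_c}x_{k_c}^{-1}x_{i_c}^{-1}]=[y_1^{-1}(y_1x_{j_c}y_4^{-1})(y_4x_{i_c}y_3^{-1})(y_2x_{k_c}y_3^{-1})^{-1}(y_1x_{i_c}y_2^{-1})^{-1}y_1]$ holds verbatim in the free group, with the $y$'s cancelling.)

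The genuine gap is in your construction of the inverse. You define $\Psi(y_j)=1$ for every region meridian, but then the edge relator $s_i=y_{r_i}x_iy_{l_i}^{-1}$ is sent to $\Psi(x_i)=a_j$, which is a meridian of $L$ and is \emph{not} trivial in $\pi_1(S^3\setminus L)$ (it generates an infinite cyclic subgroup). So your $\Psi$ does not descend to the quotient; if it did, it would force every Wirtinger generator to die and collapse the link group. Your attempted repair --- that $s_i$ with both region meridians trivialized ``forces $x_i$ to equal a single Wirtinger generator'' --- is not what the relator says: it forces $x_i=y_{r_i}^{-1}y_{l_i}=1$. The paper's fix is to first prove surjectivity of $\Phi'$ directly: since $Y$ omits the region containing $\partial D$, the meridian of that region is trivial, and propagating inward through the relators $s_i$ along a path to $\partial D$ expresses each $[y_j]$ as a word $w_j(x_1,\dots,x_n)$. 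One then defines $\Psi(y_j)=\Psi(w_j(x_1,\dots,x_n))$ (not $1$), checks that this $\Psi$ respects the relators, and verifies $\Psi'\circ\Phi'=\mathrm{id}$, which together with the surjectivity already established gives the isomorphism. You also skip this surjectivity step entirely, and without it you have no candidate word $w_j$ with which to define $\Psi$ correctly.
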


\begin{proof}
In order to clarify the arguments, in this proof we use symbols $x_j$, $y_j$ and $a_j$ for elements of the free groups, and 
their equivalence classes in $\pi_1 ( S^3 \setminus L )$
and $\pi_1 (B^4 \setminus Y)$ are denoted by using the square brackets $[ ~ \cdot ~]$. 
Since $\mathfrak{gl} (R_i) = c(R_i) $ for $i=1, \ldots, n_0 - 1$, we have $[\gamma_j] = 1$ for each cutting point $c_j$ by~\eqref{eq103}, 
and hence $[x_{f_j}] = [x_{b_j}]$ by the relator $t_j$.

We first show that $\Phi'$ is well-defined. 
Let $[y_1], [y_2], [y_3]$ and $[y_4]$ be the meridians of the regions of $X$ on $D$ adjacent to the vertex of $X$ corresponding 
to a crossing $c$ of the diagram $D_X$ given as in Figure~\ref{fig11}. 
From the relators $s_1, \ldots, s_{n'}$ in Theorem~\ref{thm2}, we have 
\[
[y_4 x_{i_c} y_3^{-1}] = [y_1 x_{i_c} y_2^{-1}] = [y_1 x_{j_c} y_4^{-1}] = [y_2 x_{k_c} y_3^{-1}] = 1 
\]
in $\pi_1 (B^4 \setminus Y)$. 
Thus, we have 
\[
\begin{split}
[\Phi (a_{j_c} a_{i_c} a_{k_c}^{-1} a_{i_c}^{-1})] &= [x_{j_c} x_{i_c} x_{k_c}^{-1} x_{i_c}^{-1}] \\
&= [y_1^{-1} ( y_1 x_{j_c} y_4^{-1}) ( y_4 x_{i_c} y_3^{-1}) ( y_2 x_{k_c} y_3^{-1})^{-1}
( y_1 x_{i_c} y_2^{-1})^{-1} y_1] = 1,  
\end{split}
\]
which implies that $\Phi'$ is well-defined.

\begin{figure}[htbp]
\begin{center}
\includegraphics[width=3.5cm, bb=220 606 330 708]{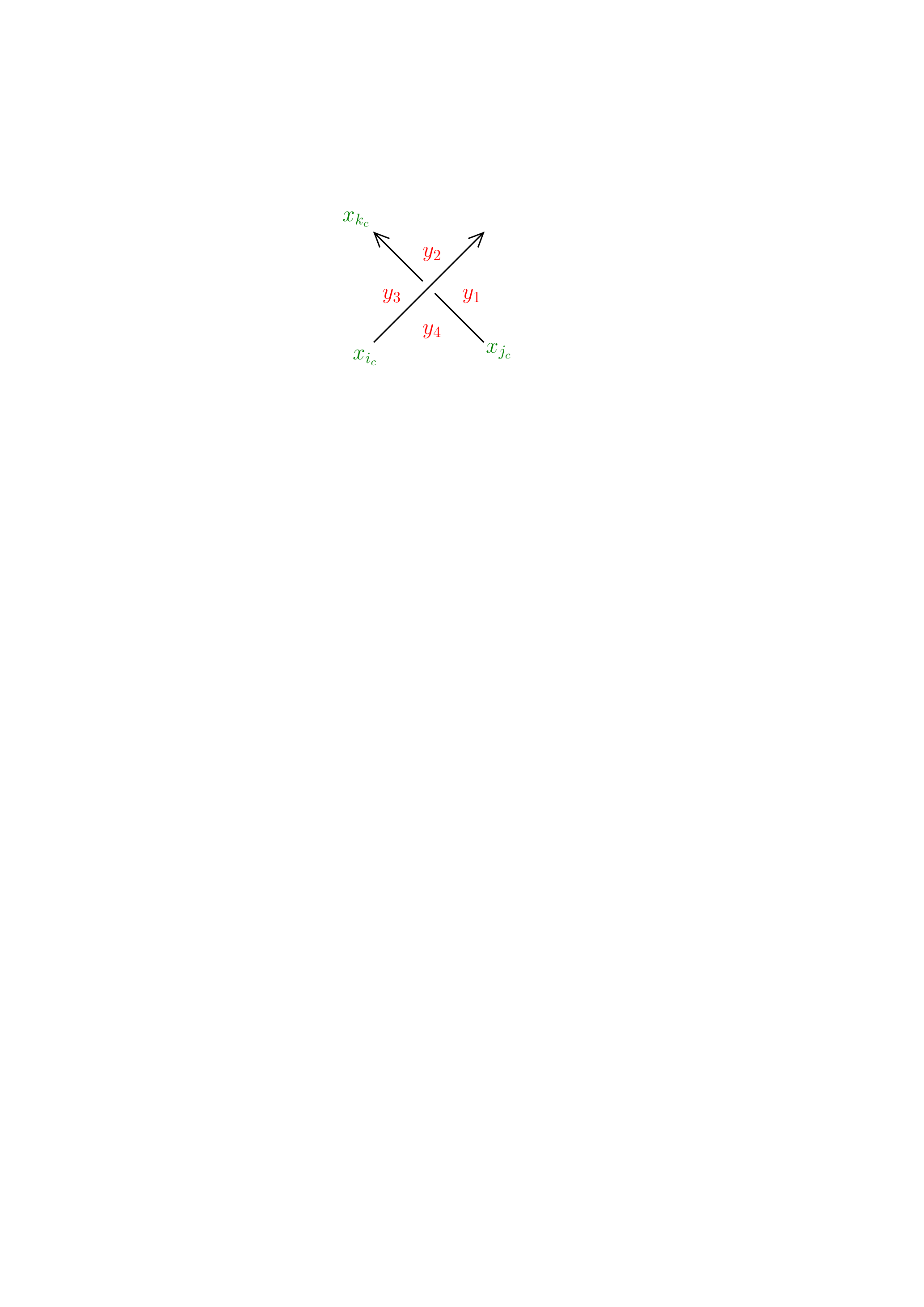}
\caption{The meridians of regions around a crossing.}\label{fig11}
\end{center}
\end{figure}

Next, we show that $\Phi'$ is an epimorphism. 
Since the subpolyhedron $Y$ does not contain the region containing $\partial D$, the meridians of the regions of 
$D_X \setminus A$ that intersect $\partial D$ are all the identity element of $\pi_1 (B^4 \setminus Y)$. 
Thus, for each $j=1,\ldots,m$,
we obtain a word $w_j(x_1, \ldots, x_n)$ in $\{ x_1, \ldots, x_n \}$ 
satisfying 
$[w_j(x_1, \ldots, x_n)] = [y_j]$ 
by using the relators $s_1, \ldots, s_{n'}$,
which implies that $\Phi'$ is an epimorphism. 
More precisely, we can show this fact by using those relators corresponding to strands of $D_X \setminus A$
meeting a simple path in $D \setminus A$ from a point on the region of $D_X \setminus A$ 
with the meridian $y_j$ to a point on a region of $D_X \setminus A$ intersecting $\partial D$, 
but we omit the details here for simplicity of exposition.

Finally, we show that $\Phi'$ is a monomorphism. 
To show this, we define a homomorphism $\Psi$ from 
the free group  $\langle x_1, \ldots , x_n, y_1, \ldots , y_m \rangle$ to 
the free group $\langle a_1, \ldots , a_N \rangle$ as follows: 
\begin{enumerate}
\item
For a generator $x_i$ corresponding to the meridian of a strand of $D_X \setminus A$, 
define $\Psi (x_i)$ to be the generator of  $\langle a_1, \ldots , a_N \rangle$ corresponding to the strand of $D_L$ 
containing that strand of $D_X \setminus A$. 
\item
For a generator $y_j$ corresponding to the meridian of a region of $D_X \setminus A$, 
first fix a word $w_j(x_1, \ldots, x_n)$ in $\{ x_1, \ldots, x_n \}$ satisfying 
$[w_j(x_1, \ldots, x_n)] = [y_j]$, and then define $\Psi (y_j)$ by 
$\Psi (y_j) = \Psi (w_j(x_1, \ldots, x_n))$. 
The existence of such a word $w_j(x_1, \ldots, x_n)$ follows from the above argument. 
\end{enumerate}
Now, by definition, we can easily check that 
$\Psi$ induces a well-defined homomorphism 
$\Psi' : \pi_1 (B^4 \setminus Y) \to \pi_1 ( S^3 \setminus L )$ and 
$\Psi' \circ \Phi'$ is the identity on $\pi_1 ( S^3 \setminus L )$, which implies that 
$\Phi'$ is a monomorphism.
\end{proof}

\begin{remark}\label{rem100}
Let $(X,\gl)$ be a shadowed polyhedron with a link diagram presentation $D_X$
and $Y$ be the simple subpolyhedron of $X$ obtained from $X$ by removing the region containing $\partial D$ as in Theorem~\ref{thm3}.  
Suppose that $D$ is the unit disk on $\Real^2$.
Since $X$ collapses onto $D$, we can identify $B^4$, in which $X$ is embedded, with $D\times D^2$ so that the disk $D$ lies as $D\times \{(0,0)\}\subset X\subset B^4$, where $D^2$ is the unit disk on $\Real^2$.
Choose $\varepsilon>0$ sufficiently small so that
$\Nbd(\partial D; D)=\{u\in D\mid |u|\geq 1-\varepsilon\}$ does not intersect $\Sing(X)$, and 
consider the homotopy $\varphi_t$ from $D\times D^2$ to itself given by 
\[
\varphi_t(u,u')=
\begin{cases}
  ((1-(1-|u'|)t)u, u')\in D\times D^2
  & \text{if $u\not\in \Nbd(\partial D; D)$} \\
  \left(\left(1-\frac{(1-|u|)(1-|u'|)}{\varepsilon}t\right)u, u'\right) \in D\times D^2 
  & \text{if $u\in \Nbd(\partial D; D)$}.
\end{cases}
\]
This homotopy shows that 
$B^4\setminus Y$ is homotopy-equivalent to $B^4\setminus \varphi_1(Y)$. Moreover, the pair $(B^4, \varphi_1(Y))$ is homeomorphic to the cone of the pair $(S^3, L)$,
where $L$ is the link given by the diagram $D_X$.
This allows us to show the isomorphism of the two fundamental groups in Theorem~\ref{thm3} without comparing their presentations.
\end{remark}

\section{Lefschetz fibrations of divides}

\begin{definition}\label{dfn21}
A {\it divide} $P$ is the image of a generic and relative immersion of 
a finite number of copies of the unit interval or the unit circle into
the unit disk $D$ on $\Real^2$. The generic condition is that
\begin{itemize}
   \item the self-intersection points in the image lie in the interior of $D$ and are only normal crossings,
   \item an immersed interval intersects $\partial D$ at the endpoints transversely, and
   \item an immersed circle does not intersect $\partial D$.
\end{itemize}
\end{definition}

A divide was introduced by N.\,A'Campo in~\cite{AC99,AC98} as a generalization of real morsified curves of complex plane curve singularities~\cite{AC75a, AC75b, GZ74a, GZ74b, GZ77}.

Let $P$ be a divide on $D$ and regard the tangent bundle $T\Real^2$ of the real $2$-plane $\Real^2$ 
as the $4$-dimensional space $\Real^4$ in a natural way.
The link $L(P)$ of $P$ is the set of simple closed curves in the unit sphere $S^3$ in $\Real^4=T\Real^2$ defined by
\[
    L(P)=\{(x,u)\in \Real^4\mid x\in P,\; u\in T_xP,\; \|(x,u)\|=1\}.
\]
In~\cite{AC98}, A'Campo proved that if a divide is connected then its link is fibered whose monodromy is a product of right-handed Dehn twists.
He also proved that if a divide is a real morsified curve of a complex plane curve singularity then its fibration is isomorphic to the Milnor fibration, and if a divide consists only of immersed intervals then the unknotting number of the link is equal to the number of double points. Furthermore, in~\cite{AC98b}, he proved that there are many links of divides that are hyperbolic. The link-types of the links of divides had been studied by  Couture-Perron~\cite{CP00}, Hirasawa~\cite{Hir02} 
and Kawamura~\cite{Kaw02}, 
and there are many related works (see for instance the references in~\cite{IN20}).

\begin{figure}[htbp]
\includegraphics[scale=0.6, bb=129 544 533 713]{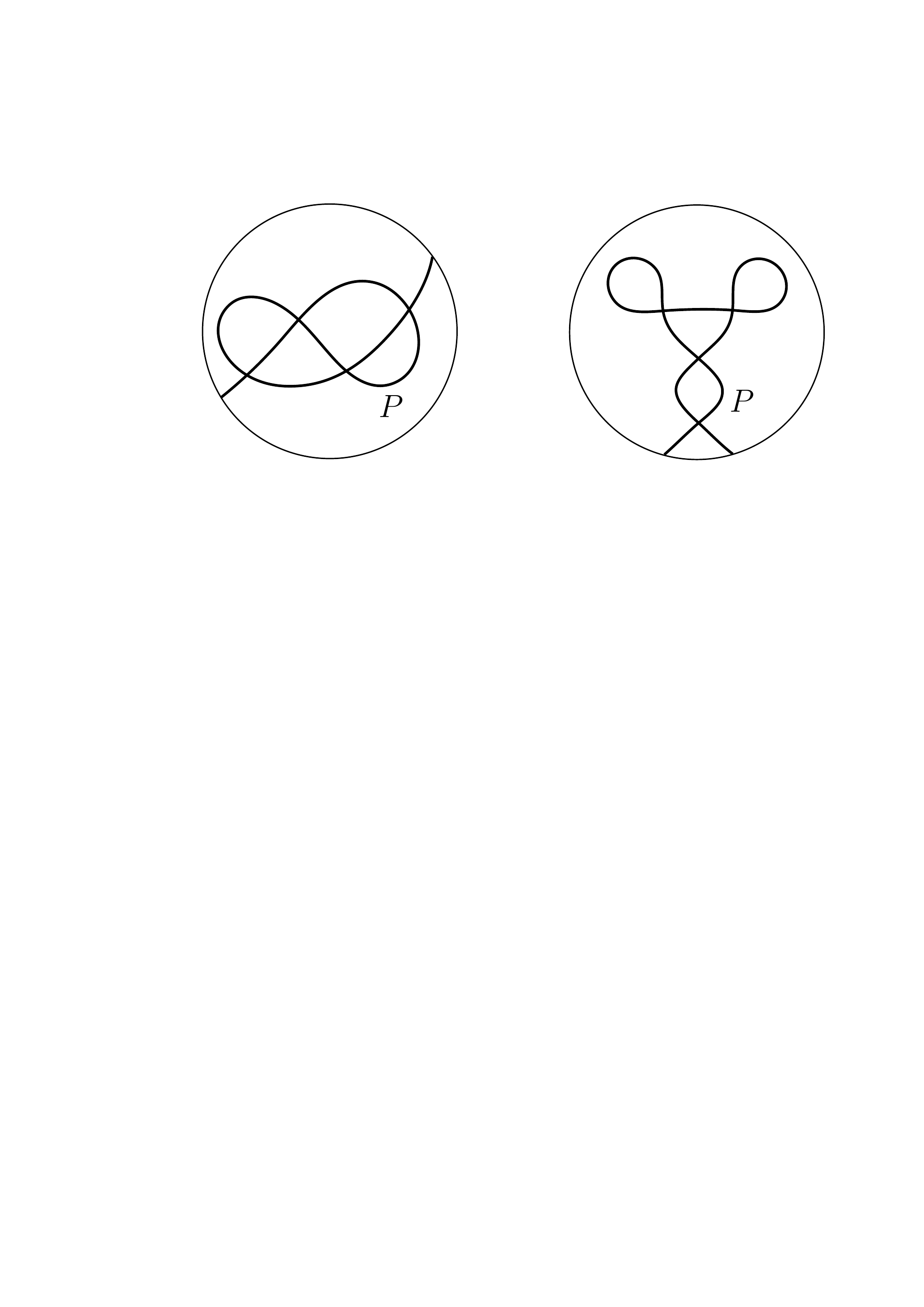}
\caption{Divides in the unit disk: The left is a divide of the $(3,5)$-torus knot, which is a real morsified curve of the singularity of $f(z,w)=z^3-w^5$. The right one does not come from a singularity. The link of this divide is $10_{139}$.}
\label{fig14}
\end{figure}

Let $P$ be a connected divide.
Let $f_P:\Real^2\to\Real$ 
be a Morse function on $\Real^2$ 
such that $f_P^{-1}(0)\cap D=P$ and each region bounded by $P$ has only one Morse singularity. The fibration of a divide is given by the map 
\[
   \frac{F_P}{|F_P|}: S^3\setminus L(P)\to S^1,
\]
where $F_P:T\Real^2\to\Complex$ is a kind of complexified function of $f_P$ given as
\[
   F_P(x,u)=f_P(x)+\sqrt{-1}\eta df_P(x)(u)-\frac{1}{2}\eta^2\chi(x)H_{f_P}(x)(u,u),
\]
where $\eta$ is a sufficiently small positive real number, $H_{f_P}$ is the Hessian of $f_P$ and $\chi(x)$ is a bump function which is $1$ at the double points of $P$ and $0$ outside small neighborhoods of the double points. 
The intersection of $F_P^{-1}(0)$ with $S^3$ coincides with the link $L(P)$ of the divide $P$.

The singular values of the map $F_P$ lie on the real line in the target $\Complex$. We choose a narrow disk $D_\varepsilon$ in $\Complex$ containing all singular values.
Then the map $F_P:B^4\cap F_P^{-1}(D_\varepsilon)\to\Complex$ is a Lefschetz fibration and the fibration of $S^3\setminus L(P)$
can be regarded as the restriction of $F_P$ to $B^4\cap F_P^{-1}(\partial D_\varepsilon)$.
Focusing on this property, a divide and its fibration had been generalized to those on a compact oriented surface in~\cite{Ishi04}. 

A doubling method was introduced by W.~Gibson and the first author in~\cite{GI02a, GI02b} to obtain a link diagram of a divide. Recently,  in~\cite{IN20}, the first and the third author clarified the relationship between divides and shadowed polyhedrons via the doubling method. 
In their paper, the doubled curve of a divide $P$ is obtained by the following steps:
\begin{itemize}
\item[1.] Double the curve of $P$.
\item[2.] For each endpoint of $P$, close the corresponding two endpoints of the doubled curve by a small half circle.
\item[3.] For each edge of $P$ that is not adjacent to an endpoint, add a crossing between the  two edges of the doubled curve parallel to the edge.
\end{itemize}
Figure~\ref{fig13} is an example of a divide and its doubled curve. 
The immersed curve on the right is the doubled curve of the divide described on the left. 

\begin{figure}[htbp]
\includegraphics[scale=0.7, bb=129 543 565 713]{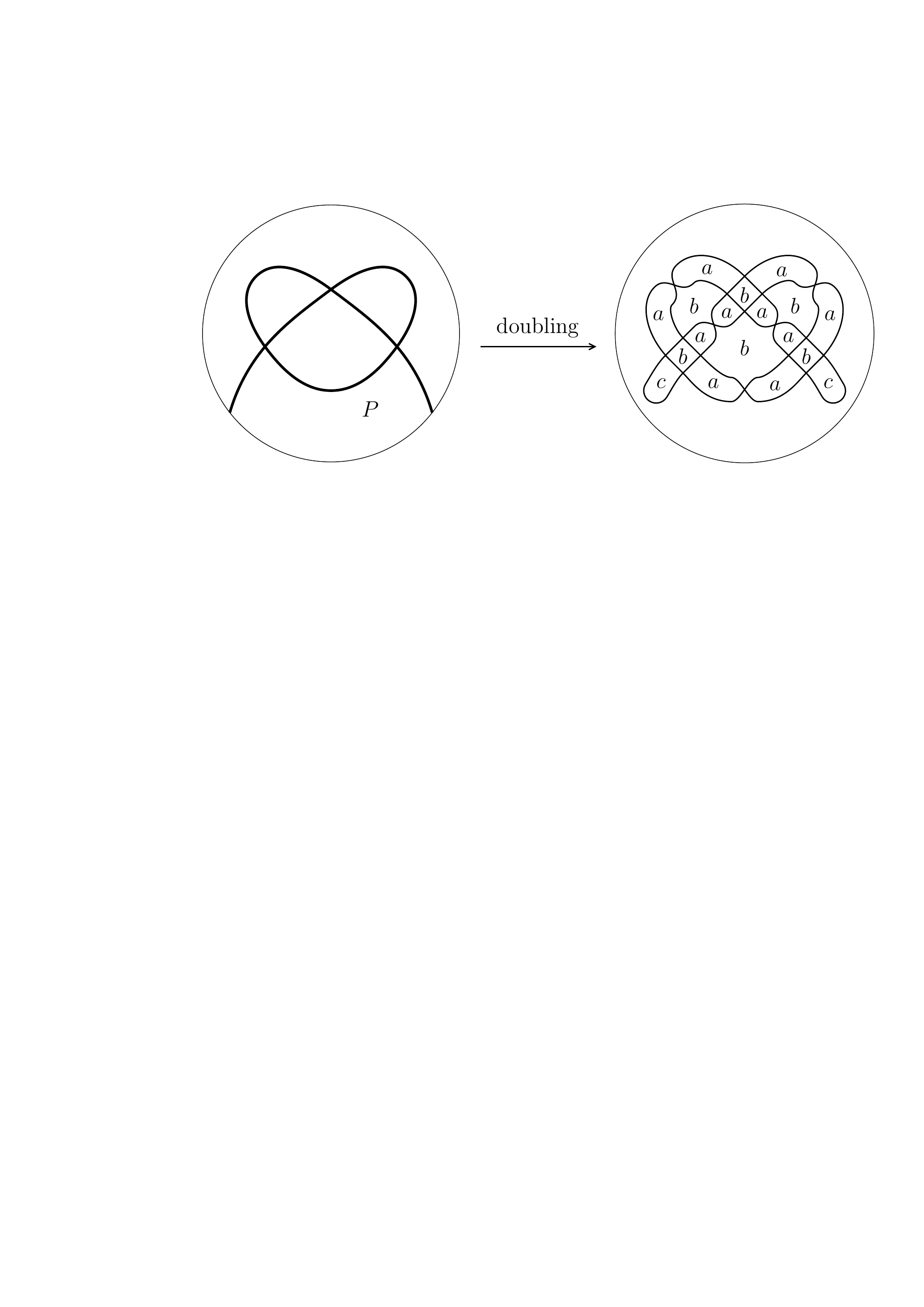}
\caption{The figure on the right is an immersed curve presentation of the shadowed polyhedron of the divide on the left. The gleams of the internal regions labeled by $a$, $b$, $c$ are $\frac{1}{2}$, $-1$, $0$, respectively. The link of this divide is the $(3,4)$-torus knot.}
\label{fig13}
\end{figure}

Each edge both of whose endpoints are vertices of $P$ corresponds to two triangular regions bounded by the doubled curve, and we assign the label $a$ to these regions. 
Each edge one of whose endpoint is not a vertex of $P$ corresponds to a bigon bounded by the double curve, and we assign the label $c$ to this region.
Each region bounded by $P$ corresponds to a region bounded by the doubled curve, 
and each double point of $P$ corresponds to a square region bounded by the doubled curve.
We assign the label $b$ to these regions. 
We assign half-integers $\frac{1}{2}$, $-1$, $0$ as gleams to the regions labeled by $a$, $b$, $c$, respectively,
and regard the doubled curve as an immersed curve presentation of a shadowed polyhedron. 

\begin{definition}
The shadowed polyhedron $(X,\gl)$ obtained from a divide $P$ as above is called the {\it shadowed polyhedron of a divide $P$}.
\end{definition}

Let $(X_P,\gl_P)$ be the shadowed polyhedron of a connected divide $P$. 
The $4$-manifold of $(X_P,\gl_P)$ is a $4$-ball, which is regarded as the unit $4$-ball in $T\Real^2$. 
The assertion in~\cite{IN20} is that a regular fiber of the Lefschetz fibration of a connected divide $P$, embedded in $B^4$, is the closure $Y$ of the union of the regions labeled by $a$ and $c$ and the annular regions $X\setminus D$. Moreover, each vanishing cycle, which is the core of a right-handed Dehn twist of a complex Morse singularity of the Lefschetz fibration, is the boundary of a region labeled by $b$ and it vanishes to the center of that region. In particular, if a divide is obtained from a real morsification of an isolated, complex plane curve singularity then $B^4$ is regarded as the Milnor ball and $Y$ is regarded as a Milnor fiber embedded in $B^4$.
Thus the embedding of a regular fiber of the Lefschetz fibration of a divide, including a Milnor fiber,
can be completely described by using the shadowed polyhedron. 
Due to this description, by applying Theorem~\ref{thm2}, we can calculate the fundamental groups of the complements of fibers of the Lefschetz fibrations of divides in $B^4$.

\begin{theorem}\label{thm4}
Let $(X_P,\gl_P)$ be the shadowed polyhedron of a connected divide $P$.
\begin{itemize}
\item[(1)] Let $Y_{ac}$ be the closure of the union of the regions labeled by $a$ and $c$ and the annular regions $X\setminus D$. 
Then the pair $(B^4, Y_{ac})$ with rounding corners of $Y_{ac}$ is diffeomorphic to the pair $(B^4, B^4\cap F_P^{-1}(t))$, where $t$ is a regular value on $\Complex$ sufficiently close to the origin.
\item[(2)] Let $Y_{a\square c}$ be the union of $Y_{ac}$ and the square regions corresponding to the double points of $P$. Then 
$B^4\setminus Y_{a\square c}$ is homotopy-equivalent to $B^4\setminus F_P^{-1}(0)$.
\item[(3)]  Let $f:\Complex^2\to\Complex$ be a polynomial map with an isolated singularity at the origin and $f(0,0)=0$ and $B^4$ be a sufficiently small ball in $\Complex^2$ centered at the origin.
Let $P$ be a divide obtained from the singularity of $f$ at the origin by a real morsification
and $Y_{abc}$ be the subpolyhedron of $X_P$ obtained from $X_P$ by removing the region containing $\partial D$. 
Then $B^4\setminus Y_{abc}$ is homotopy-equivalent to $B^4\setminus f^{-1}(0)$.
\end{itemize}
\end{theorem}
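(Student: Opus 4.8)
The plan is to prove the three statements in increasing order of subtlety, establishing the geometric identifications first and then reducing the homotopy-equivalence claims to the machinery already developed in this paper. I would begin by recalling from \cite{IN20} the explicit construction of the shadowed polyhedron $(X_P,\gl_P)$ and the precise position of each region inside $B^4 = F_P^{-1}(D_\varepsilon)\cap B^4$. The key organizing principle is that the Lefschetz fibration $F_P\colon B^4\cap F_P^{-1}(D_\varepsilon)\to\Complex$ has all its singular values on the real axis, and that each region labeled $b$ collapses to a single complex Morse critical point. The regular fiber $F_P^{-1}(t)$ for $t$ real and slightly positive is, by A'Campo's construction, built from the disk $D$ together with bands attached across the double points, which is exactly the topological type of $Y_{ac}$.

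For part (1), I would identify $Y_{ac}$ with a regular fiber by a direct comparison of handle structures. The closure of the union of the regions labeled $a$ and $c$ together with the annuli $X\setminus D$ is a surface with boundary; I would check that, after rounding corners, this surface is isotopic in $B^4$ to $F_P^{-1}(t)$ by verifying that both are obtained from $D$ by attaching one band per double point of $P$ with the framing dictated by $\gl_P$. The main point here is bookkeeping: matching the gleams $\tfrac12$, $-1$, $0$ on the $a$-, $b$-, $c$-regions against the local model of $F_P$ near a Morse point, using the assertion of \cite{IN20} that the vanishing cycle is the boundary of a $b$-region vanishing to its center. This is essentially a restatement of the main result of \cite{IN20} and should follow by citing that paper and performing the corner-rounding carefully.

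For part (2), I would argue that removing the open $b$-regions (the squares dual to the double points) from $Y_{a\square c}$ amounts, up to homotopy, to removing the critical fiber $F_P^{-1}(0)$. Since each $b$-region collapses onto a single Morse critical value on the real axis, the set $Y_{a\square c}$ is a deformation retract of a neighborhood of the union of the singular fiber and the nearby regular-fiber structure; I would build an explicit deformation retraction of $B^4\setminus Y_{a\square c}$ onto $B^4\setminus F_P^{-1}(0)$ that pushes the squares onto the critical points. The homotopy $\varphi_t$ in Remark~\ref{rem100} furnishes the template for such a deformation, and the hard part will be showing that the retraction sweeps across the squares compatibly with the fibration near each critical point without introducing new intersections.

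Part (3) is where the Milnor fibration enters, and I expect it to be the main obstacle. The plan is to invoke A'Campo's theorem \cite{AC98} that when $P$ is a real morsification of the singularity of $f$, the fibration of the divide is isomorphic to the Milnor fibration of $f$, so that $(B^4, F_P^{-1}(0))$ is diffeomorphic, as a pair, to the Milnor ball together with $f^{-1}(0)$. Combining this with part (2), $B^4\setminus F_P^{-1}(0)$ is homotopy-equivalent to $B^4\setminus f^{-1}(0)$; the remaining task is to identify $Y_{abc}$ (which adds the $b$-regions back in and removes only the region containing $\partial D$) with $Y_{a\square c}$ up to homotopy equivalence of complements. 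The delicate point is that $Y_{abc}$ and $Y_{a\square c}$ differ precisely by whether the square $b$-regions are filled, and I would show that filling the squares does not change the homotopy type of the complement because the attaching data realize the vanishing cycles as boundaries that bound in the total space. I expect the genuine difficulty to be reconciling the two different subpolyhedra — tracking how the $b$-regions interact with the isomorphism of fibrations from \cite{AC98} — rather than any single computation, so I would isolate that comparison as the technical heart of the argument and defer the routine retractions to explicit homotopies modeled on $\varphi_t$.
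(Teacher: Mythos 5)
Your treatment of parts (1) and (2) follows essentially the paper's route: (1) is the main observation of \cite{IN20}, and (2) rests on the local statement that near each double point the singular fiber $F_P^{-1}(0)$ is obtained from the annular regular fiber by collapsing the vanishing cycle --- the boundary of the square $b$-region --- to a node, these collapses being performable independently because the squares are disjoint. Your phrasing of (2) is slightly garbled (the squares are \emph{added} to $Y_{ac}$ to form $Y_{a\square c}$, not removed from it, and the homotopy $\varphi_t$ of Remark~\ref{rem100} is not really the relevant model), but the deformation-retraction idea is the right one.

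Part (3), however, contains a genuine error. You invoke A'Campo's theorem to get a diffeomorphism of pairs $(B^4, F_P^{-1}(0))\cong(B^4, f^{-1}(0))$ and hence, via part (2), a homotopy equivalence $B^4\setminus F_P^{-1}(0)\simeq B^4\setminus f^{-1}(0)$. This is false. A'Campo's theorem identifies the \emph{boundary} fibrations $S^3\setminus L(P)\to S^1$ and $S^3\setminus L\to S^1$; it does not identify the zero fibers inside $B^4$. The fiber $F_P^{-1}(0)$ is the Lefschetz singular fiber containing only the nodes over the double points of $P$ (the saddles of $f_P$), whereas $f^{-1}(0)$ is the cone on $L$ with a single, generally non-nodal, singular point, and their complements differ in general. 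The paper's own second example of Section~5 ($f(z,w)=z^2-w^3$) is a counterexample: item (3) of that example gives $\pi_1(B^4\setminus Y_{a\square c})\cong\Integer$, hence by part (2) $\pi_1(B^4\setminus F_P^{-1}(0))\cong\Integer$, while $\pi_1(B^4\setminus f^{-1}(0))$ is the trefoil group. Relatedly, you misidentify what separates $Y_{a\square c}$ from $Y_{abc}$: they differ by the chamber $b$-regions (the regions bounded by $P$), not by the squares, which $Y_{a\square c}$ already contains, and filling in the chambers genuinely changes the complement. The correct argument, as in the paper, is that $f^{-1}(0)$ is obtained from the regular fiber by shrinking \emph{all} vanishing cycles --- those bounding the squares \emph{and} those bounding the chambers --- simultaneously to the origin; this simultaneous degeneration exists precisely because $P$ arises from a real morsification of $f$ (run the morsification backwards), and it is matched on the shadow side by $Y_{abc}$, which contains every $b$-region.
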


\begin{proof}
The assertion~(1) follows from the observation in~\cite{IN20}. 
The Milnor fiber of a complex Morse singularity is an annulus, and the singular fiber, which consists of two complex planes intersecting at their origins, is obtained from the annulus 
by shrinking the vanishing cycle to the singular point. 
Therefore, the assertion~(2) follows. Note that we can shrink these vanishing cycles independently since they are disjoint on $Y_{ac}$.
In the case of the assertion~(3), we need to shrink all vanishing cycles.
This is possible if $P$ is obtained from a real morsification since the reverse operation of the morsification ensures the existence of simultaneous shrinking of all the vanishing cycles.
\end{proof}

\begin{remark}
For an oriented divide $\vec P$ on the unit disk $D$, we can make its shadowed polyhedron $(X_{\vec P}, \mathfrak{gl}_{\vec P})$ easily, 
where $X_{\vec P}$ is the shadow and $\mathfrak{gl}_{\vec P}$ is its gleam function.
See~\cite[Lemma 3.1]{IN20}. 
The shadow $X_{\vec P}$ is the union of $D$ and annuli like the shadows of divides.
Let $Y$ be the subpolyhedron of $X_{\vec P}$ obtained by removing the region containing $\partial D$.
A link in $S^3$ is defined for each oriented divide (see \cite{GI02a} for the definition), called the link of an oriented divide, and this is isotopic to $\partial Y$ in $S^3=\partial B^4$. 
As explained in Remark~\ref{rem100},
$\pi_1(B^4\setminus Y)$ is isomorphic to $\pi_1(S^3\setminus \partial Y)$.
Hence the fundamental group of the complement of the link of an oriented divide $\vec P$ can be calculated 
by applying Theorem~\ref{thm2} to $Y\subset X_{\vec P}$.
\end{remark}

\begin{example}\label{ex1}
Let $P$ be a divide on the left-top in Figure~\ref{fig15}. Its doubled curve is described on the left-bottom.
This divide is the real part of the Morse singularity of $f:\Complex^2\to\Complex$ given by $f(z,w)=z^2-w^2$. 
We assign over/under information to the double points of the doubled curve as shown in the figure on the right
and regard it as a link diagram presentation of the shadowed polyhedron $X_P$ of $P$.
Orient the strands and choose a reduced system of cutting trees as in the figure,
and set the generators $x$ and $y^{-1}$ to be the meridians of the right-top and right-bottom regions, respectively.

\begin{figure}[htbp]
\includegraphics[width=13.5cm, bb=129 483 466 712]{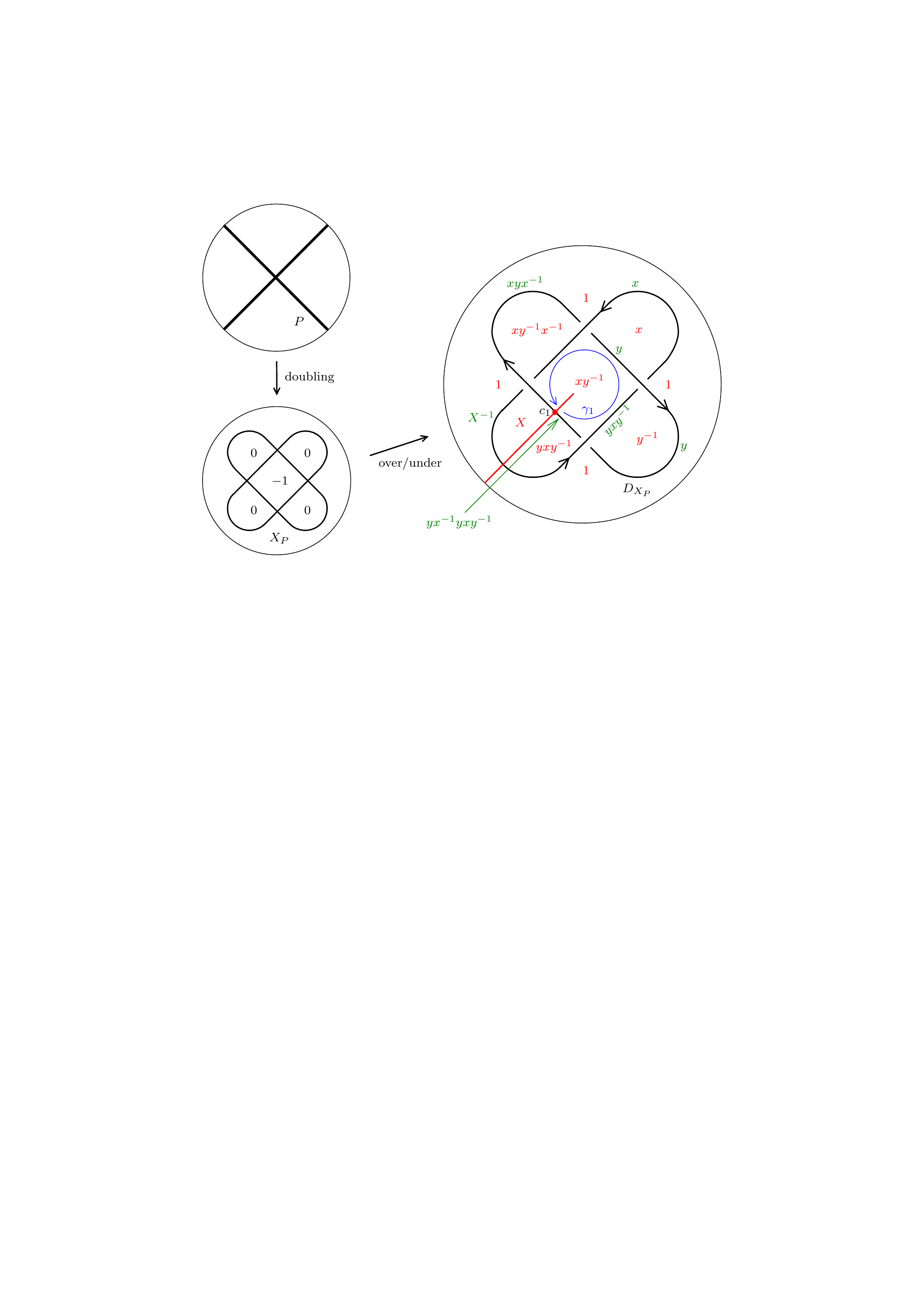}
\caption{A link diagram presentation of the shadowed polyhedron of a divide of a complex Morse singularity.  In the figure, $X=xy^{-1}xyx^{-1}$.}
\label{fig15}
\end{figure}

\begin{itemize}
\item[(1)] Let $Y$ be the subpolyhedron of $X_P$ obtained from $X_P$ by removing the region containing $\partial D$. We calculate $\pi_1(B^4\setminus Y)$. Since the meridian of the region containing $\partial D$ is the identity, we first write $1$ on the region. Next, we calculate the other meridians 
by using the relations $y_{r_i}x_iy_{l_i}^{-1}=1$ in Theorem~\ref{thm3} inductively. 
Finally, we use the relation~\eqref{eq103} in Theorem~\ref{thm3}. 
Let $c_1$ be the cutting point of the region with gleam $-1$. 
Then the relation~\eqref{eq103} is written as $\gamma_1x_{f_1}\gamma_1^{-1}x_{b_1}^{-1}=1$. 
Since $\gamma_1=xy^{-1}$, we have
\[
(xy^{-1})(yx^{-1}yxy^{-1})(xy^{-1})^{-1}(xyx^{-1})^{-1}
=yxy^{-1}x^{-1}=1.
\]
We can verify that the relation obtained from the other cutting point is also $yxy^{-1}x^{-1}=1$.
Thus, by Theorem~\ref{thm4}~(3), we have $\pi_1(B^4\setminus f^{-1}(0))=\Integer\langle x\rangle\oplus \Integer\langle y\rangle$. Since $(B^4,B^4\cap f^{-1}(0))$ is the cone of $(S^3, \text{a Hopf link})$, this coincides with the fundamental group of the complement of a Hopf link.
\item[(2)] Let $Y_{ac}$ be the subpolyhedron obtained from $Y$ by removing the region with gleam $-1$. The Milnor fiber of the Morse singularity is an annulus and we can see it directly on $X_P$ as $Y_{ac}$.
We now calculate $\pi_1(B^4\setminus Y_{ac})$. 
Since the meridian of the region with gleam $-1$ is the identity, we have the relation $xy^{-1}=1$ additionally. Hence 
\[
   \pi_1(B^4\setminus Y_{ac})\cong\langle x, y \mid xy=yx, \; 
   xy^{-1}=1\rangle\cong \Integer.
\]
\item[(3)] 
Set the gleams of the digonal regions to be $1$ and those of the square region to be $-2$ 
so that it satisfies the condition $\gl(R_j)=c(R_j)$ in Theorem~\ref{thm3}. Then $\gamma_j=1$ and we have
the relation $yx^{-1}yxy^{-1}=xyx^{-1}$. Setting ${\mathcal Y}=xyx^{-1}$, we have
\[
   \pi_1(B^4\setminus Y)\cong \langle x, {\mathcal Y} \mid 
x^{-1}{\mathcal Y}x^{-1}{\mathcal Y}x{\mathcal Y}^{-1}x{\mathcal Y}^{-1}=1\rangle,
\] 
which is the fundamental group of the complement of a $(2,4)$-torus link, which is the link given by the diagram $D_{X_P}$.
\end{itemize}
\end{example}

\begin{example}
Let $P$ be a divide on the left-top in Figure~\ref{fig16}. Its doubled curve is described on the left-bottom.
This divide is the real part of a real morsified curve of the singularity of $f(z,w)=z^2-w^3$. 
We assign over/under information to the double points of the doubled curve as shown in the figure on the right
and regard it as a link diagram presentation of the shadowed polyhedron $X_P$ of $P$.
Orient the strands and choose a reduced system of cutting trees as in the figure,
and set the generators $x$ and $y^{-1}$ to be the meridians of the right-top and right-bottom regions, respectively.

\begin{figure}[htbp]
\includegraphics[width=13.5cm, bb=129 483 503 712]{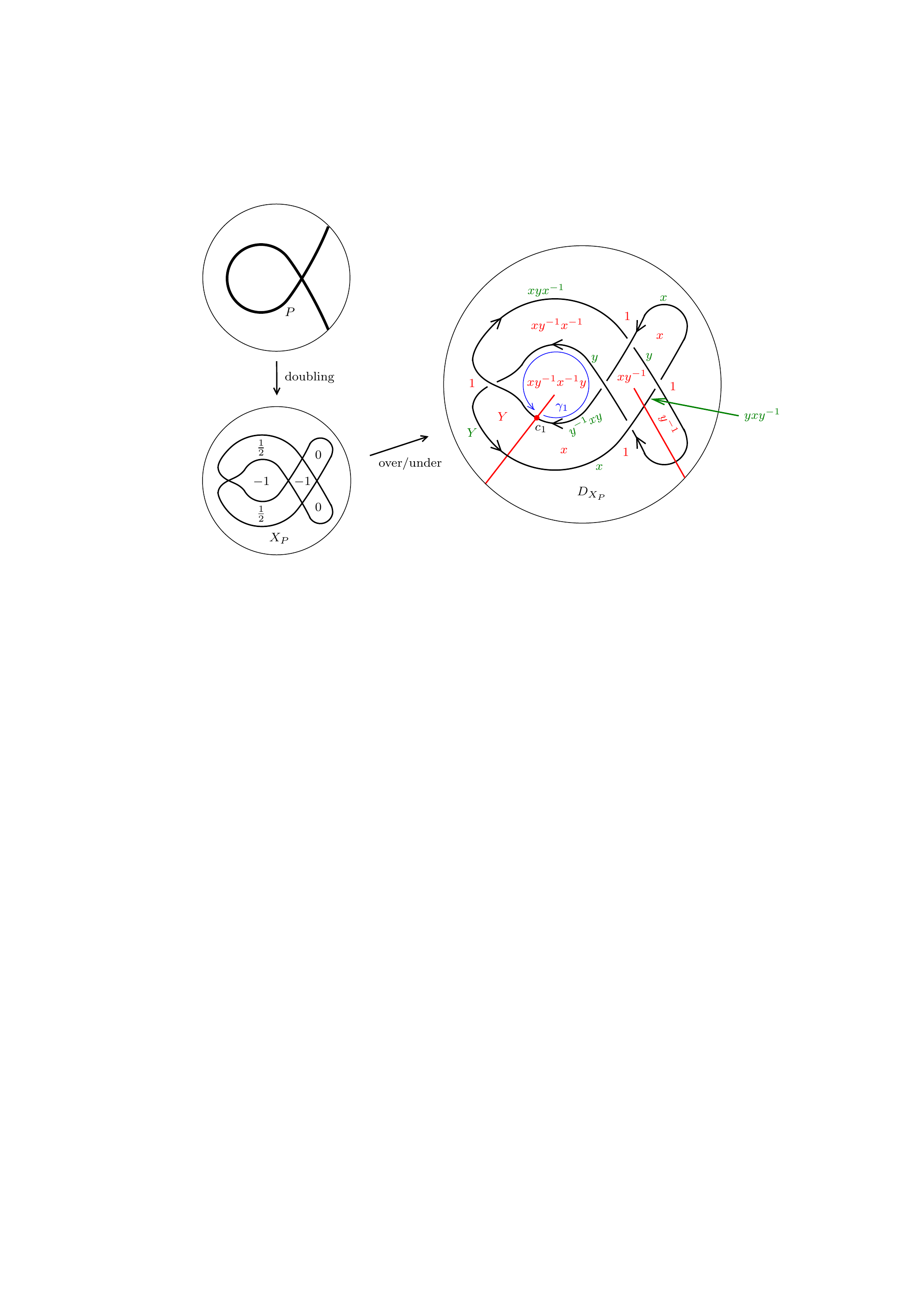}
\caption{A link diagram presentation of the shadowed polyhedron of a divide of the singularity of $f(z,w)=z^2-w^3$. In the figure, $Y=xy^{-1}x^{-1}yxyx^{-1}$.}\label{fig16}
\end{figure}

\begin{itemize}
\item[(1)] Let $Y$ be the subpolyhedron of $X_P$ obtained from $X_P$ by removing the region containing $\partial D$. We calculate $\pi_1(B^4\setminus Y)$ as we did in Example~\ref{ex1}~(1). 
Let $c_1$ be the cutting point, on the left cutting tree, of the region with gleam $-1$. 
Since $\gamma_1=1$, we have the relation $y^{-1}xy=xyx^{-1}$.
We can verify that the relation obtained from the other cutting point on the left cutting tree is also $y^{-1}xy=xyx^{-1}$. 
Thus, by Theorem~\ref{thm4}~(3), we have 
\[
   \pi_1(B^4\setminus Y)\cong\langle x, y \mid xyx=yxy \rangle.
\]
Since $(B^4,B^4\cap f^{-1}(0))$ is the cone of $(S^3, \text{a $(2,3)$-torus knot})$, this coincides with the fundamental group of the complement of a $(2,3)$-torus knot.
\item[(2)] Let $Y_{ac}$ be the subpolyhedron of $X_P$ obtained from $Y$ by removing the region with gleam $-1$. The Milnor fiber of the singularity of $f(z,w)=z^2-w^3$ is a torus with one boundary component, and we can see it directly on $X_P$ as $Y_{ac}$.
Since the meridian of the region with gleam $-1$ is the identity, we have $xy^{-1}=1$. 
Hence
$\pi_1(B^4\setminus Y_{ac})\cong \Integer$. 
\item[(3)] Let $Y_{b_i}$ be the subpolyhedron of $X_P$ obtained from $Y$ be removing only one of the two region with gleam $-1$. In either case, we have $x=y$ and hence $\pi_1(B^4\setminus Y_{b_i})\cong \Integer$. 
\end{itemize}
\end{example}

\section{Complexified real line arrangements}

Let $\mathcal A$ be a line arrangement on $\Real^2$, which is the union of $k$ different lines on $\Real^2$.
It is given by the zero set of a polynomial 
\[
   f(u,v)=\prod_{i=1}^k(a_ku+b_kv+c_k),
\]
where $[a_i:b_i:c_i]\ne [a_j:b_j:c_j]$ for $i\ne j$. Regarding the variables $(u,v)$ as complex variables $(z,w)$, we obtain a complexified real line arrangement, denoted by $\mathcal A_{\Complex}$.

The fundamental groups of the complements of complexified real line arrangements can be determined by combinatorics of the arrangements easily, see~\cite{Ran82, Ran82a}.
For further information about the fundamental groups of arrangements, see~\cite{OT92} and the references therein.
Minimal stratifications of the complements of complexified real line arrangements had been studied by Yosihnaga in~\cite{Yos12}.
Recently, Sugawara and Yoshinaga gave Kirby diagrams for those arrangements using divides with cusps~\cite{SY21}.

\begin{example}\label{ex61}
Let $\mathcal A$ be a real line arrangement consisting of $4$ generic lines, described on the left-top in Figure~\ref{fig16}. Its doubled curve is described on the left-bottom.
We assign over/under information to the double points of the doubled curve as the figure on the right
and regard it as a link diagram presentation of the shadowed polyhedron $X_{\mathcal A}$ of $\mathcal A$.
Orient the strands and choose the reduced system $A^{\mathrm{red}}$ of cutting trees
and set the generators $x, y^{-1}, z$ and $w^{-1}$ to be the meridians as in the figure.

\begin{figure}[htbp]
\includegraphics[width=13.5cm, bb=129 474 495 712]{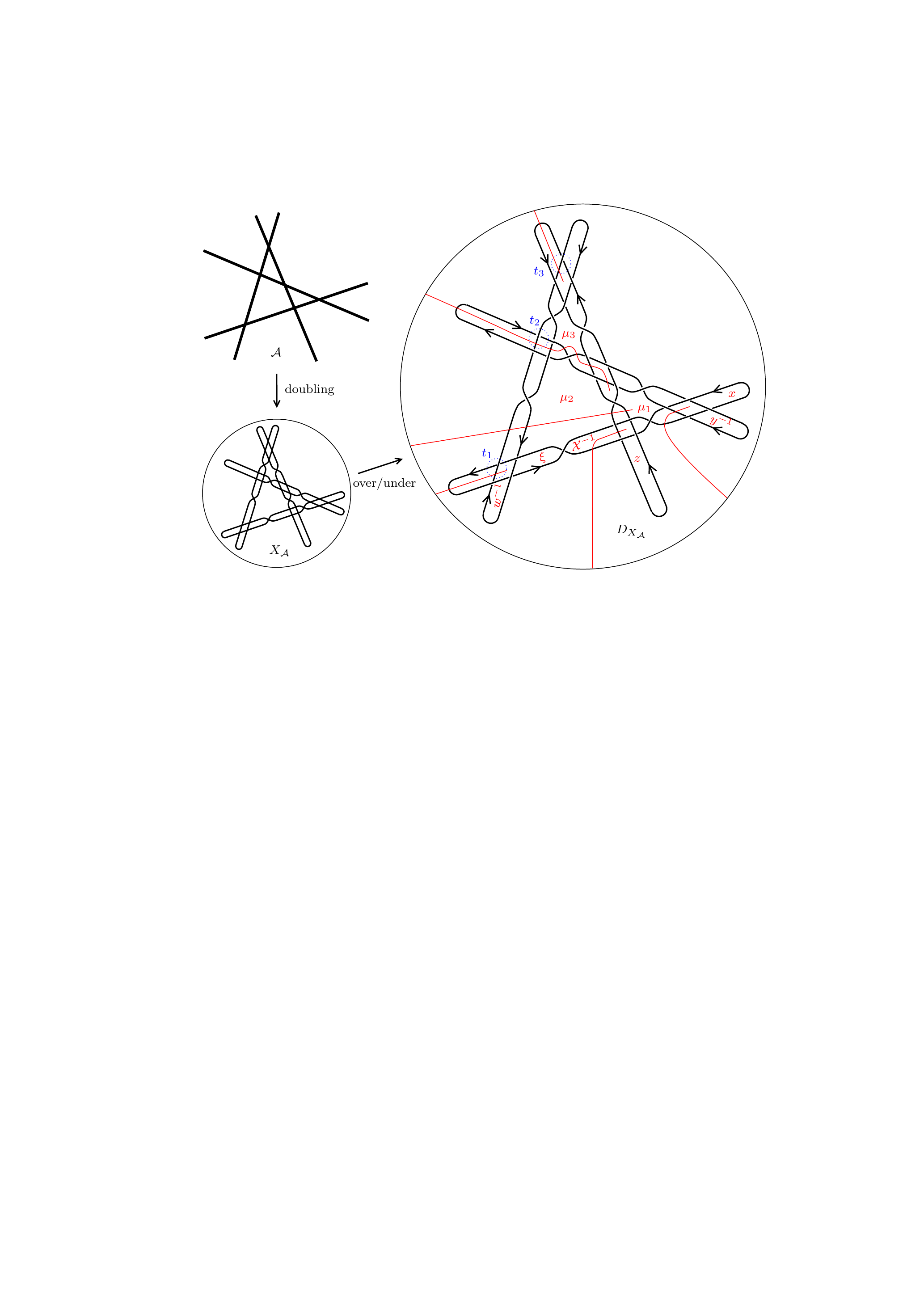}
\caption{A link diagram presentation of the shadowed polyhedron of the real line arrangement $\mathcal A$.}
\label{fig18}
\end{figure}

\begin{itemize}
\item[(1)] Let $Y$ be the subpolyhedron of $X_{\mathcal A}$ obtained from $X_{\mathcal A}$ by removing the region containing $\partial D$. We calculate $\pi_1(B^4\setminus Y)$. 
By applying Theorem~\ref{thm2}, we can fix the meridians of all regions of $X_{\mathcal A}\setminus A^{\mathrm{red}}$  and obtain three relations 
\[
\begin{split}
   t_1:\;\; & 
   \xi^{-1}{\mathcal X}\xi w\xi^{-1}=w \\
   t_2:\;\; & 
   x^{-1}y\xi wzy^{-1}
   =z{\mathcal X}x{\mathcal X}^{-1}z^{-1}wz{\mathcal X}x^{-2} \\
   t_3:\;\; &
   x{\mathcal X}^{-1}z^{-1}wz{\mathcal X}x^{-1}
   ={\mathcal X}^{-1}z{\mathcal X}x{\mathcal X}^{-1}z^{-1}wz{\mathcal X}x^{-1}yz^{-1}y^{-1}
\end{split}
\]
around each of the crossings $t_1, t_2, t_3$ in the figure, 
where ${\mathcal X}=y^{-1}xy$ and $\xi=z{\mathcal X}x{\mathcal X}^{-1}z^{-1}$. 
Setting ${\mathcal Z}=x^{-1}yzy^{-1}x$ and ${\mathcal W}=x{\mathcal X}^{-1}z^{-1}wz{\mathcal X}x^{-1}$, we have
$
x{\mathcal Z}y{\mathcal W} 
=
{\mathcal Z}y{\mathcal W}x
=
y{\mathcal W}x{\mathcal Z}
=
{\mathcal W}x{\mathcal Z}y
$. 
By Theorem~\ref{thm4}~(3), we see that $\pi_1(B^4\setminus Y)$ is the fundamental group of the complement of four complex lines intersecting at one point, which is the complexification $\mathcal A'_{\Complex}$ of the real line arrangement $\mathcal A'$ shown on the left in Figure~\ref{fig19}. Thus we have
\[
\begin{split}
   \pi_1(\Complex^2\setminus \mathcal A'_{\Complex})
   &\cong\pi_1(B^4\setminus Y) \\
   &\cong 
   \langle x,y, {\mathcal Z}, {\mathcal W} \mid 
   x{\mathcal Z}y{\mathcal W} 
=
{\mathcal Z}y{\mathcal W}x
=
y{\mathcal W}x{\mathcal Z}
=
{\mathcal W}x{\mathcal Z}y
\rangle.
\end{split}
\] 
Note that this is the fundamental group of the complement of the singular fiber of $f(z,w)=z^4-w^4$ in the Milnor ball and it coincides with the fundamental group of the complement of a $(4,4)$-torus link in $S^3$.
\begin{figure}[htbp]
\includegraphics[width=13.5cm, bb=129 595 520 713]{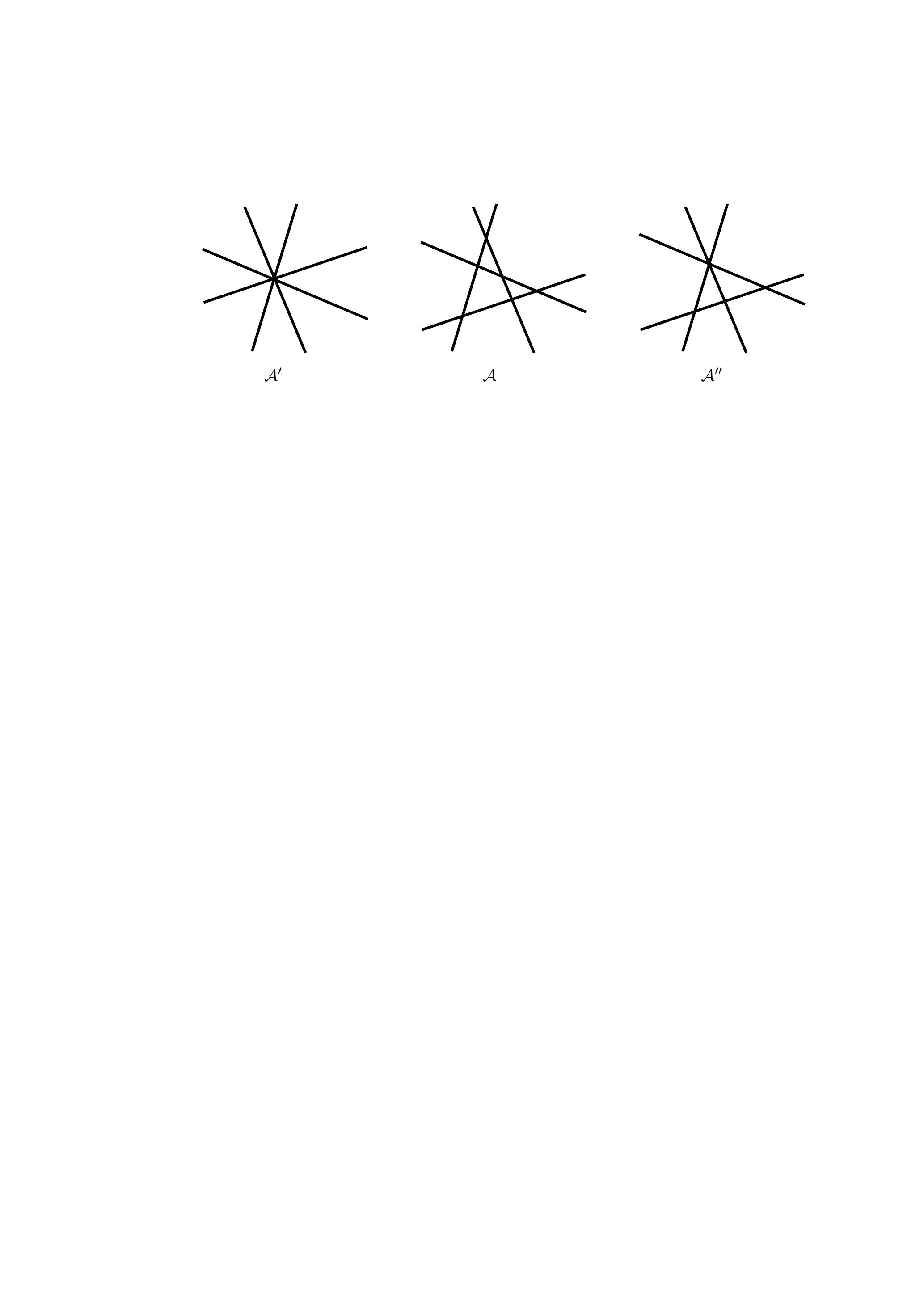}
\caption{Real line arrangements in Example~\ref{ex61}.}
\label{fig19}
\end{figure}
\item[(2)] Let $Y_{a\square c}$ be the subpolyhedron of $X_{\mathcal A}$ obtained from $Y$ by removing 
the regions corresponding to the regions bounded by $\mathcal A$ (in other words the chambers of $\mathcal A$). 
The set $Y_{a\square c}$ is regarded as the complexified real line arrangement $\mathcal A_{\Complex}$
of $\mathcal A$. 
The meridians $\mu_1, \mu_2$ and $\mu_3$ of the regions of $X_{\mathcal A}\setminus A^{\mathrm{red}}$ in the figure are calculated as
\[
\begin{split}
   \mu_1= & x{\mathcal X}^{-1} \\
   \mu_2= & 
   x{\mathcal X}^{-1}x{\mathcal X}^{-1}z^{-1}{\mathcal X}^{-1}z{\mathcal X}^2x^{-1}
   \\
   \mu_3= & 
   xy^{-1}x^{-1}y^2z^{-1}y^{-1}{\mathcal X}^{-1}z{\mathcal X}^2x^{-1}
\end{split}
\]
and they are equal to $1$ in $\pi_1(B^4\setminus Y_{a\square c})$. 
From these relations, we have $xy=yx$, $xz=zx$ and $yz=zy$, and applying them to the relations $t_1, t_2$ and $t_3$ we have $xw=wx$, $yw=wy$ and $zw=wz$. Thus
\[
   \pi_1(\Complex^2\setminus \mathcal A_{\Complex})
   \cong
   \pi_1(B^4\setminus Y_{a\square c}) 
   \cong 
   \Integer\langle x \rangle
   \oplus
   \Integer\langle y \rangle
   \oplus
   \Integer\langle z \rangle
   \oplus
   \Integer\langle w \rangle.
\]
\item[(3)] Let $Y_{b_3}$ be the subpolyhedron of $X_{\mathcal A}$ obtained from $Y_{a\square c}$ by attaching the region with the meridian $\mu_3$. The set $Y_{b_3}$ corresponds to the complexification $\mathcal A''_{\Complex}$ of the real line arrangement $\mathcal A''$ shown on the right in Figure~\ref{fig19}. 
The relations for $\pi_1(B^4\setminus Y_{b_3})$ are $\mu_1=\mu_2=1$ and $t_1, t_2, t_3$. 
From  $\mu_1=\mu_2=1$ we have $xy=yx$ and $xz=zx$.
Applying them to the relations $t_1, t_2$ and $t_3$ we have $xw=wx$, $wzy=ywz$ and $zyw=wzy$. 
Thus
\[
   \pi_1(\Complex^2\setminus \mathcal A''_{\Complex})
   \cong
   \pi_1(B^4\setminus Y_{b_3}) 
   \cong 
   \Integer\langle x \rangle
   \oplus
   \langle y,z,w\mid 
   wzy=ywz=zyw \rangle.
\]
\end{itemize}
\end{example}

\end{document}